\documentclass{amsart}
\usepackage{amsmath,amssymb,amsthm,comment,todonotes}
\usepackage[pagewise]{lineno}
\theoremstyle{definition}
\newtheorem{definition}{Definition}[section]
\theoremstyle{plain}
\newtheorem{proposition}[definition]{Proposition}
\newtheorem{lemma}[definition]{Lemma}
\newtheorem{theorem}[definition]{Theorem}
\newtheorem{corollary}[definition]{Corollary}
\newtheorem*{theorem*}{Theorem}
\theoremstyle{remark}
\newtheorem{remark}[definition]{Remark}

\title{$G$-deformations of maps into projective space}
\author{Mason Pember}
\address[M.~Pember]{Dipartimento di Matematica, Politecnico di Torino,
Corso Duca degli Abruzzi 24, I-10129 Torino, Italy}
\email{mason.pember@polito.it}

\subjclass[2010]{ 
Primary 53A20; Secondary 53A40, 53B25.
}

\begin{document}
\maketitle

\begin{abstract}
$G$-deformability of maps into projective space is characterised by the existence of certain Lie algebra valued 1-forms. This characterisation gives a unified way to obtain well known results regarding deformability in different geometries. 
\end{abstract}

\section{Introduction}
It is well known that isothermic surfaces are the only surfaces in conformal geometry that admit non-trivial second order deformations~\cite{C1923} and that $R$- and $R_{0}$-surfaces are the only surfaces in projective geometry that admit non-trivial second order deformations~\cite{C1920ii, F1937}. In \cite{MN2006} it is shown that $\Omega$- and $\Omega_{0}$-surfaces are the only surfaces in Lie sphere geometry that admit non-trivial second order deformations. Motivated by these results we investigate $G$-deformations of smooth maps into $G$-invariant submanifolds of projective space $\mathbb{P}(V)$, where $G$ is a group acting linearly on $V$. This method quickly recovers the aforementioned results regarding deformability in the context of gauge theory.

The examples studied in this paper are all examples of $R$-spaces \cite{T1954}. The author believes that the main theorem of this paper can be used to study deformations in general $R$-spaces and intends to do so in subsequent work. 

It should be noted that Cartan's method of moving frames was utilised in \cite{G1974,J1981} to outline methods for considering deformations of submanifolds of general homogeneous spaces. A different approach is used in this paper that is more suited to recovering gauge-theoretic characterisations of certain classes of surfaces. 

We start by recalling the definition of $k$-th order deformations of maps into homogeneous spaces \cite{G1974,J1981}. Let $N$ be a manifold on which a Lie group $G$, with Lie algebra $\mathfrak{g}$, acts smoothly and let $F:\Sigma\to N$ be a smooth map from a manifold $\Sigma$ into $N$.

\begin{definition}
\label{defn:defo}
Let $k\in\mathbb{N}\cup \{0\}$. We say that $\hat{F}:\Sigma\to N$ is a \textit{$k^{th}$-order $G$-deformation of $F$} if there exists a smooth map $g:\Sigma\to G$ such that for all $p\in\Sigma$ 
\[ g^{-1}(p)\hat{F} \quad \text{and} \quad F\]
agree to order $k$ at $p$. 

If $F$ and $\hat{F}$ are congruent, i.e., $\hat{F}=A F$ for some $A\in G$, we say that the deformation is \textit{trivial}. A map $F:\Sigma\to N$ is said to be \textit{$G$-deformable of order $k$} if it admits a non-trivial $k$-th order $G$-deformation, otherwise $F$ is said to be \textit{$G$-rigid to $k$-th order}. 
\end{definition}

\begin{remark}
\label{rem:chart}
Note that the notion of ``agreeing to order $k$" means that the projections into any chart agree to order $k$. 
\end{remark}

\begin{remark}
\label{rem:transitive}
$k$-th order contact is an equivalence relation. It therefore follows that the notion of $G$-deformability of maps in Definition~\ref{defn:defo} is also an equivalence relation.
\end{remark}

The following lemmata concern the uniqueness and triviality of $G$-deformations and will be used later in this paper. The proofs can be deduced easily using the fact that $G$-deformability is an equivalence relation.  

\begin{lemma}
\label{lem:defouniq}
Let $\hat{F}:\Sigma\to N$ be a $k$-th order $G$-deformation of $F$ via $g:\Sigma\to G$. Then $\hat{F}$ is a $k$-th order $G$-deformation of $F$ via $\tilde{g}:\Sigma\to G$ as well if and only if $F$ is a $k$-th order $G$-deformation of itself via $h:=g^{-1}\tilde{g}$.
\end{lemma}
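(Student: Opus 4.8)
The plan is to reduce the whole statement to two facts: the transitivity of $k$-th order contact at a point (Remark~\ref{rem:transitive}), and the observation that for any fixed $A\in G$ the self-action $x\mapsto Ax$ is a diffeomorphism of the target and hence preserves $k$-th order contact at each point. Throughout I will write $\phi_1\sim_p\phi_2$ as shorthand for ``$\phi_1$ and $\phi_2$ agree to order $k$ at $p$'' (suppressing the fixed $k$), and I read $g^{-1}(p)\hat F$ as the map $q\mapsto g^{-1}(p)\hat F(q)$, in which $g^{-1}(p)\in G$ is a \emph{constant} group element acting on the target.

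First I would record the equivariance statement: if $\phi_1\sim_p\phi_2$ then $A\phi_1\sim_p A\phi_2$ for every fixed $A\in G$. By the remark following Definition~\ref{defn:defo}, $k$-th order agreement is tested in charts, so it amounts to equality of the truncated Taylor expansions at $p$; since $x\mapsto Ax$ is a smooth diffeomorphism, the chain rule shows that these truncated expansions transform identically, so the common $k$-jet of $\phi_1,\phi_2$ at $p$ is carried to the common $k$-jet of $A\phi_1,A\phi_2$ at $p$. This is the only nonformal ingredient, and it is the step I expect to require the most care, essentially the naturality of $k$-jets under postcomposition by a diffeomorphism.

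With this in hand the argument hinges on a single identity. By hypothesis $\hat F$ is a $k$-th order $G$-deform of $F$ via $g$, that is $g^{-1}(p)\hat F\sim_p F$ for all $p$. Applying the constant element $h^{-1}(p)=\tilde g^{-1}(p)g(p)$ and invoking the equivariance statement gives, for every $p$,
\[ \tilde g^{-1}(p)\hat F \;=\; h^{-1}(p)\bigl(g^{-1}(p)\hat F\bigr) \;\sim_p\; h^{-1}(p)F, \qquad (\star) \]
where the first equality uses $h^{-1}(p)g(p)=\tilde g^{-1}(p)$. Note that $(\star)$ uses only the hypothesis on $g$; it involves neither the candidate $\tilde g$ being a deformation nor the self-deform hypothesis on $h$.

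Both implications now fall out of $(\star)$ by transitivity. For the forward direction, assume in addition that $\tilde g$ is a $k$-th order $G$-deformation, i.e.\ $\tilde g^{-1}(p)\hat F\sim_p F$; combining this with $(\star)$ via Remark~\ref{rem:transitive} yields $h^{-1}(p)F\sim_p F$, so $F$ is a $k$-th order deform of itself via $h$. For the converse, assume $h^{-1}(p)F\sim_p F$; combining with $(\star)$ gives $\tilde g^{-1}(p)\hat F\sim_p F$, so $\tilde g$ is indeed a $k$-th order $G$-deformation of $F$. This closes both directions and completes the proof.
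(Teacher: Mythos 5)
Your proof is correct and follows essentially the same route as the paper: establish $\tilde g^{-1}(p)\hat F = h^{-1}(p)g^{-1}(p)\hat F \sim_p h^{-1}(p)F$ by acting with the constant element $h^{-1}(p)$, then get both directions from transitivity (Remark~\ref{rem:transitive}). The only difference is that you spell out the equivariance step (naturality of $k$-jets under postcomposition by the diffeomorphism $x\mapsto Ax$), which the paper compresses into the phrase ``since $h^{-1}(p)$ is constant''.
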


\begin{lemma}
\label{lem:defotriv}
Suppose that $\hat{F}:\Sigma\to N$ is a $k$-th order $G$-deformation of $F$ via $g:\Sigma\to G$. Then this is a trivial deformation if and only if $g=Ah$ where $A\in G$ and $h:\Sigma\to G$ such that $F$ is a $k$-th order $G$-deformation of itself via  $h:\Sigma\to G$. 
\end{lemma}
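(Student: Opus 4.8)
The plan is to prove Lemma~\ref{lem:defotriv} by unwinding the definition of a trivial deformation and then applying the previous lemma. By definition, the deformation via $g$ is trivial precisely when $\hat{F} = AF$ for some fixed $A \in G$. Since we are told $\hat{F} = gF$, the triviality condition is equivalent to saying that $AF$ is itself a $k$-th order $G$-deform of $F$, realised via the constant map $p \mapsto A$. The natural strategy is therefore to treat $\hat{F} = AF$ as a $k$-th order $G$-deform of $F$ and ask, via Lemma~\ref{lem:defouniq}, when the \emph{same} map $\hat{F}$ can also be realised as a deform via the original $g$.

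First I would establish the forward direction. Suppose the deformation is trivial, so $\hat{F} = AF$ with $A$ constant. Then $\hat{F}$ is a $k$-th order $G$-deform of $F$ via the constant map $A$. But we also know $\hat{F}$ is a $k$-th order $G$-deform of $F$ via $g$. Applying Lemma~\ref{lem:defouniq} with the roles set so that $A$ is the ``known'' deformation and $g$ the alternative, we deduce that $F$ is a $k$-th order $G$-deform of itself via $h := A^{-1}g$. Setting $h := A^{-1}g$ gives $g = Ah$, which is exactly the claimed factorisation.

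Next I would prove the converse. Assume $g = Ah$ where $A \in G$ is constant and $F$ is a $k$-th order $G$-deform of itself via $h$. Then $\hat{F} = gF = A(hF)$. Since $F$ is a deform of itself via $h$, for each $p$ the map $h^{-1}(p)F$ agrees to order $k$ with $F$ at $p$; applying the constant element $A$ and using that left translation by a fixed group element preserves $k$-th order contact (as in the argument of Lemma~\ref{lem:defouniq}), one checks that $A F$ and $A(hF) = gF = \hat{F}$ agree to order $k$ at each point. Since this holds at every $p$ with the \emph{same} constant element $A$, transitivity (Remark~\ref{rem:transitive}) yields $\hat{F} = AF$, so the deformation is trivial.

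I expect the main obstacle to be bookkeeping rather than conceptual: the delicate point is that triviality requires a \emph{single, point-independent} element $A$ realising $\hat{F} = AF$ globally, whereas the notion of ``deform of itself via $h$'' is pointwise. The care needed is in matching the pointwise $k$-th order contact statements to the global congruence $\hat{F} = AF$ and in correctly orienting the application of Lemma~\ref{lem:defouniq}, since that lemma is phrased symmetrically in $g$ and $\tilde{g}$ through $h = g^{-1}\tilde{g}$. In particular, I would be careful that invoking Lemma~\ref{lem:defouniq} with the constant deformation $A$ in place of $g$ is legitimate, as a constant map into $G$ is a perfectly valid $k$-th order $G$-deformation.
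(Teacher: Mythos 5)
Your proposal is correct and takes essentially the same route as the paper, whose one-line proof is exactly your forward direction (congruence $\hat{F}=AF$ makes the constant map $A$ a $k$-th order deformation, and Lemma~\ref{lem:defouniq} then yields $h=A^{-1}g$), while your converse simply inlines the contact computation underlying Lemma~\ref{lem:defouniq} rather than citing it. One small point of precision: applying constants to $h^{-1}(p)F \sim F$ gives contact of $AF$ with the \emph{frozen} map $Ah(p)F=g(p)F$ rather than with $\hat{F}=A(hF)$ directly, so you must pass to $\hat{F}$ via the standing hypothesis $g^{-1}(p)\hat{F}\sim F$ and transitivity as you gesture at --- though for triviality the zeroth-order identity $h(p)F(p)=F(p)$ already suffices, since it gives $\hat{F}(p)=g(p)F(p)=AF(p)$ at every $p$.
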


Clearly, if $\hat{F}$ is a $k$-th order $G$-deformation of $F$ via $g$ then we may write $\hat{F}=gF$. In this way we may recover $\hat{F}$ from $g$. Furthermore, for any $A\in G$, it is clear that $(Ag)F$ is a $k$-th order deformation of $F$ if and only if $gF$ is a $k$-th order deformation of $F$. This leads us to the following definition:

\begin{definition}
$\eta\in \Omega^{1}(\mathfrak{g})$ is a \textit{$k$-th order infinitesimal deformation of $F$} if $\eta$ satisfies the Maurer-Cartan equation and $gF$ is a $k$-th order $G$-deformation of $F$ for any $g:\Sigma\to G$ satisfying $g^{-1}dg =\eta$. 
\end{definition}

To simplify our exposition in this paper, we shall use the following notation: let $j,k\in\mathbb{Z}$ and define $S_{j,k}:=\{j,...,k\}$ if $j\le k$ and $S_{j,k}:=\emptyset$ if $k<j$.  Let $W$ be a vector bundle over $\Sigma$, suppose that $X_{j},..., X_{k}\in \Gamma T\Sigma$ and let $\sigma\in \Gamma W$. Then for $J\subset S_{j,k}$ with $J= \{j_{1}<...<j_{l}\}$ we let 
\[d_{X_{J}}\sigma:= d_{X_{j_{1}}}(d_{X_{j_{2}}}...(d_{X_{j_{l}}}\sigma)),\]
and
\[ d_{X_{\emptyset}}\sigma := \sigma.\]
Note that we will also use the notation $d_{X_{j_{1}}...X_{j_{l}}}\sigma$ to represent $d_{X_{j_{1}}}(d_{X_{j_{2}}}...(d_{X_{j_{l}}}\sigma))$ when it is convenient. 
We will repeatedly use the Leibniz rule, i.e., if $\sigma,\xi\in \Gamma W$ and $J\subset S_{j,k}$, then 
\[ d_{X_{J}}(\sigma\otimes\xi) = \sum_{K\subset J} (d_{X_{K}}\sigma)\otimes(d_{X_{J\backslash K}}\xi).\]

The focus of this paper is deformability of maps into submanifolds of projective space. Suppose that $V$ is a vector space with projectivisation $\mathbb{P}(V)$ and suppose that $G$ is a Lie group acting linearly on $V$. By Remark~\ref{rem:chart}, $k$-th order contact of immersions into projective space can be determined via sections of these immersions: 

\begin{proposition}
\label{prop:sections}
$\phi,\hat{\phi}:\Sigma\to \mathbb{P}(V)$ agree to order $k$ at $p\in\Sigma$ if and only if for any $v_{0}\in V^{*}$, the sections $\sigma,\hat{\sigma}$ of $\phi$ and $\hat{\phi}$, respectively, such that 
\[v_{0}(\sigma)= v_{0}(\hat{\sigma})=1\]
agree to order $k$ at $p$ on the open set where they are defined. 
\end{proposition}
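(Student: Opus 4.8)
The plan is to recognise the normalised section as nothing other than the representation of the map in a particular affine chart of $\mathbb{P}(V)$, after which the statement reduces to the chart-independence of $k$-th order contact. Throughout, write $[v]\in\mathbb{P}(V)$ for the point represented by $v\in V\setminus\{0\}$. Fix $v_{0}\in V^{*}$ and set $U_{v_{0}}:=\{[v]\in\mathbb{P}(V): v_{0}(v)\neq 0\}$ and $H_{v_{0}}:=\{w\in V: v_{0}(w)=1\}$. First I would verify that the map $\psi_{v_{0}}:U_{v_{0}}\to H_{v_{0}}$, $[v]\mapsto v/v_{0}(v)$, is well defined and is a diffeomorphism onto the affine hyperplane $H_{v_{0}}$, with smooth inverse $w\mapsto[w]$; this exhibits $(U_{v_{0}},\psi_{v_{0}})$ as a smooth chart of $\mathbb{P}(V)$ valued in the affine subspace $H_{v_{0}}\subset V$.

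The key observation is then that, on the open set $\phi^{-1}(U_{v_{0}})$, the unique section $\sigma$ of $\phi$ normalised by $v_{0}(\sigma)=1$ is exactly $\sigma=\psi_{v_{0}}\circ\phi$, and similarly $\hat{\sigma}=\psi_{v_{0}}\circ\hat{\phi}$. With this identification the proposition asserts precisely that $\phi$ and $\hat{\phi}$ agree to order $k$ at $p$ if and only if their representatives in the chart $\psi_{v_{0}}$ do.

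To prove both implications I would invoke the functoriality of $k$-th order contact: composing with a fixed smooth map cannot destroy a common $k$-jet. For the forward direction, assume $\phi$ and $\hat{\phi}$ agree to order $k$ at $p$. Agreement to order $0$ already gives $\phi(p)=\hat{\phi}(p)$, so for every $v_{0}$ with $v_{0}(\phi(p))\neq 0$ both sections are defined near $p$; composing the common $k$-jet with the smooth map $\psi_{v_{0}}$ shows that $\sigma$ and $\hat{\sigma}$ agree to order $k$ at $p$. For the converse it suffices to exhibit one admissible functional: since a vector space over $\mathbb{R}$ is not a finite union of proper subspaces, there is a $v_{0}\in V^{*}$ vanishing on neither of the lines $\phi(p)$ and $\hat{\phi}(p)$, so both sections are defined near $p$ and, by hypothesis, agree to order $k$ there. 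Composing their common $k$-jet with the smooth inverse $\psi_{v_{0}}^{-1}$ and using $\phi=\psi_{v_{0}}^{-1}\circ\sigma$, $\hat{\phi}=\psi_{v_{0}}^{-1}\circ\hat{\sigma}$ then yields agreement to order $k$ of $\phi$ and $\hat{\phi}$ in this chart, hence---by the chart-independence built into the definition---at $p$.

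The main obstacle is not conceptual but lies in making two points rigorous and watching the quantifier. First, one must check carefully that $\psi_{v_{0}}$ is a genuine diffeomorphism onto $H_{v_{0}}$ and that $k$-th order contact is preserved under composition with both $\psi_{v_{0}}$ and $\psi_{v_{0}}^{-1}$, so that the equivalence runs in both directions. Second, the clause ``for any $v_{0}$'' must be read asymmetrically in the two implications: the forward statement is to be established for every admissible $v_{0}$, whereas the converse needs only a single admissible one, whose existence must itself be argued.
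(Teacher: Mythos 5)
Your proposal is correct and follows essentially the same route as the paper: both identify the normalised section with the representation of the map in the affine chart $(U_{v_{0}},\psi_{v_{0}})$ determined by $v_{0}$, and then reduce the proposition to the chart-independence of $k$-th order contact, the paper disposing of the converse by noting that these charts form an atlas of $\mathbb{P}(V)$. Your only additions---making the jet-functoriality under composition with $\psi_{v_{0}}$ and $\psi_{v_{0}}^{-1}$ explicit, and observing that the converse needs just one admissible $v_{0}$, whose existence you justify---are refinements of detail, not a different argument.
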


Now suppose that $S$ is a $G$-invariant submanifold of $\mathbb{P}(V)$. The main theorem of this paper, proved in Section~\ref{sec:defo}, is the following: 

\begin{theorem*}
$\eta\in \Omega^{1}(\mathfrak{g})$ is a $k$-th order infinitesimal deformation of $F:\Sigma\to S$ if and only if $\eta$ satisfies the Maurer-Cartan equation and
\begin{equation}
\label{eqn:nicedefo}
\eta(Y)F\le F,\quad (d_{X_{1}}\eta(Y))F\le F, \quad ...\quad, (d_{X_{1}...X_{k-1}}\eta(Y)) F\le F,
\end{equation}
for all $Y, X_{1},...,X_{k-1},\in \Gamma T\Sigma$. 
\end{theorem*}

The remaining sections of this paper are devoted to using the main theorem to quickly recover results regarding deformability in specific cases. In all of these cases we shall see that rigidity occurs at third order. 

In Section~\ref{sec:proj3} we examine deformability of surfaces in projective 3-space. We obtain a gauge theoretic characterisation of second order deformability via the existence of a certain closed $\mathfrak{sl}(4)$-valued 1-form. 

In Section~\ref{sec:confdefo} we investigate the deformability of hypersurfaces in the conformal $n$-sphere. We recover the result that the only second order deformable surfaces in this case are isothermic surfaces in the conformal $3$-sphere. 

In Section~\ref{sec:legdefo} we investigate deformability of Legendre maps in Lie sphere geometry and projective geometry. We utilise the $\mathbb{R}^{s,t}$ models for these geometries, where $(s,t)=(4,2)$ in Lie sphere geometry and $(s,t)=(3,3)$ in projective geometry. We characterise second order deformability in these cases by the existence of a certain closed 1-form taking values in $\mathfrak{o}(s,t)$. In the case of Lie sphere geometry we see that this characterisation coincides with the gauge theoretic definitions of $\Omega$- and $\Omega_{0}$-surfaces. 

In Section~\ref{sec:projrev} we recover a result of Fubini~\cite{F1916} that relates the second order deformability of a surface in projective 3-space with the second order deformability of its contact lift. This allows us to equate second order deformability in projective 3-space with the gauge theoretic definitions of $R$- and $R_{0}$-surfaces. 

\begin{remark}
In all of the cases considered in this paper, second order deformations come in 1-parameter families. These families arise in these cases because we may multiply the infinitesimal deformation $\eta$ by a constant to obtain a new infinitesimal deformation. 
\end{remark}

\section{Deformations in projective space}
\label{sec:defo} 
Suppose that $V$ is a vector space with projectivisation $\mathbb{P}(V)$ and suppose that $G$ is a Lie group acting linearly on $V$. Let $S$ be a $G$-invariant submanifold of $\mathbb{P}(V)$. Since $k$-th order contact of two maps in $S$ is equivalent to $k$-th order contact as maps into $\mathbb{P}(V)$, we may use Proposition~\ref{prop:sections} to study contact in $S$. Let $F:\Sigma\to S$ be a smooth map from a manifold $\Sigma$ into $S$. The following lemma allows us to characterise deformability of a map $gF:\Sigma\to S$ in terms of the Maurer-Cartan form of $g:\Sigma\to G$:
\begin{lemma}
\label{lem:indproof}
Let $k\in\mathbb{N}$ and suppose that $gF$ is a $(k-1)$-th order $G$-deformation of $F$. Then $F$ and $g^{-1}(p)g F$ agree to order $k$ at $p\in \Sigma$ if and only if for any $v_{0}\in V^{*}$ and $Y,X_{1},...,X_{k-1}\in \Gamma T\Sigma$, 
\[ \theta(Y) d_{X_{S_{1,k-1}}}\sigma = \sum_{K\subset S_{1,k-1}}v_{0}(\theta(Y)d_{X_{K}}\sigma)d_{X_{S_{1,k-1}\backslash K}}\sigma,\]
at $p$, where $\theta=g^{-1}dg$ and $\sigma\in \Gamma F$ such that $v_{0}(\sigma)=1$.
\end{lemma}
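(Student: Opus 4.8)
The plan is to convert the projective-contact condition into a statement about $V$-valued sections using Proposition~\ref{prop:sections}, and then to extract the stated identity by induction on $k$. Fix $v_{0}\in V^{*}$ and the section $\sigma\in\Gamma F$ with $v_{0}(\sigma)=1$. By Proposition~\ref{prop:sections}, $F$ and $g^{-1}(p)gF$ agree to order $k$ at $p$ if and only if $\sigma$ and the normalised lift $\hat{\sigma}:=\tau/v_{0}(\tau)$ of $g^{-1}(p)gF$ agree to order $k$ at $p$, where $\tau:=g^{-1}(p)(g\sigma)$. Since $g^{-1}(p)$ is constant, $d_{X_{J}}\tau=g^{-1}(p)\,d_{X_{J}}(g\sigma)$; differentiating $g\sigma$ using $dg=g\theta$ and the infinitesimal action gives $d_{X}(gw)=g(D_{X}w)$ with $D_{X}:=d_{X}+\theta(X)$, hence by induction on $|J|$ one obtains $d_{X_{J}}(g\sigma)=g\,(D_{X_{J}}\sigma)$, where $D_{X_{J}}$ is the composite operator $D_{X_{j_{1}}}\cdots D_{X_{j_{l}}}$. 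Evaluating at $p$, where $g^{-1}(p)g(p)=e$, yields the basic identity $d_{X_{J}}\tau|_{p}=(D_{X_{J}}\sigma)|_{p}$.

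Because $g$ is a $(k-1)$-th order deformation, $\sigma$ and $\hat{\sigma}$ already agree to order $k-1$ at $p$, so only the $k$-th order derivatives remain to be matched. Writing $\tau=v_{0}(\tau)\hat{\sigma}$ and applying the Leibniz rule, I would isolate the $K=\emptyset$ term to solve for $d_{X_{J}}\hat{\sigma}$, and then substitute $v_{0}(\tau(p))=1$, the basic identity, and the lower-order agreement $d_{X_{J\setminus K}}\hat{\sigma}|_{p}=d_{X_{J\setminus K}}\sigma|_{p}$ (valid for $K\neq\emptyset$, since then $|J\setminus K|\le k-1$). This shows that $k$-th order agreement is equivalent to
\[ (D_{X_{J}}\sigma)|_{p}=d_{X_{J}}\sigma|_{p}+\sum_{\emptyset\neq K\subseteq J}v_{0}\bigl((D_{X_{K}}\sigma)|_{p}\bigr)\,d_{X_{J\setminus K}}\sigma|_{p}, \]
where $J=(Y,X_{1},\ldots,X_{k-1})$ is taken with $Y$ in the outermost slot and $D_{X_{J}}=D_{Y}D_{X_{1}}\cdots D_{X_{k-1}}$.

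The remaining task is to show that this identity collapses to the asserted one. Two facts drive the reduction: first, $v_{0}(\sigma)=1$ forces $v_{0}(d_{X_{K}}\sigma)=0$ for every $K\neq\emptyset$; second, by the inductive hypothesis the lemma holds below order $k$, so the $(k-1)$-th order deformation property supplies the identities $\theta(Y')d_{X_{K}}\sigma=\sum_{K'\subseteq K}v_{0}(\theta(Y')d_{X_{K'}}\sigma)\,d_{X_{K\setminus K'}}\sigma$, holding throughout a neighbourhood of $p$ for all $|K|\le k-2$ and all $Y'$. Expanding $D_{X_{J}}\sigma$ produces the single-$\theta$ term $\theta(Y)d_{X_{S_{1,k-1}}}\sigma$, the $\theta$-free term $d_{X_{J}}\sigma$, and mixed terms carrying either two or more factors of $\theta$ or derivatives $d\theta$. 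Substituting the lower-order relations—and their derivatives, which is legitimate since they hold on a neighbourhood—rewrites each mixed term as a scalar multiple of some $d_{X_{J'}}\sigma$ with $|J'|\le k-1$, and these are seen to cancel against the contributions generated on the right-hand side by expanding $v_{0}((D_{X_{K}}\sigma)|_{p})$. What survives is exactly
\[ \theta(Y)d_{X_{S_{1,k-1}}}\sigma=\sum_{K\subseteq S_{1,k-1}}v_{0}(\theta(Y)d_{X_{K}}\sigma)\,d_{X_{S_{1,k-1}\setminus K}}\sigma \]
at $p$; the base case $k=1$ I would check directly, where the identity reads $\theta(Y)\sigma=v_{0}(\theta(Y)\sigma)\sigma$, recovered by using $v_{0}(d_{Y}\sigma)=0$.

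The main obstacle is precisely this final combinatorial collapse: one must track, in the full expansion of the composite $D_{X_{J}}$, every term with two or more factors of $\theta$ together with the $d\theta$ cross terms from the Leibniz rule, and verify that after applying the lower-order identities they match termwise the contributions of the normalisation sum $\sum_{\emptyset\neq K}v_{0}(D_{X_{K}}\sigma)\,d_{X_{J\setminus K}}\sigma$. I expect the cleanest route is an auxiliary induction showing that for each proper sub-tuple $K\subsetneq J$ the vector $(D_{X_{K}}\sigma)|_{p}$ lies in the span of $\{d_{X_{K'}}\sigma|_{p}:K'\subseteq K\}$ with coefficients equal to the scalars appearing in the target identity, after which the order-$k$ equation reduces to a purely formal cancellation. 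Since iterated derivatives depend on the chosen extensions of the vector fields, I would perform the computation in a coordinate chart with commuting frame fields, noting that $k$-th order agreement is equivalent to matching all such iterated derivatives.
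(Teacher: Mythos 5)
Your setup is correct and, up to notation, is the same as the paper's: reducing to normalised sections via Proposition~\ref{prop:sections}, and observing that with $D_{X}:=d_{X}+\theta(X)$ one has $d_{X_{J}}\tau|_{p}=(D_{X_{J}}\sigma)|_{p}$, so that $k$-th order contact at $p$ is equivalent to
\[ (D_{X_{J}}\sigma)|_{p}=d_{X_{J}}\sigma|_{p}+\sum_{\emptyset\neq K\subseteq J}v_{0}\bigl((D_{X_{K}}\sigma)|_{p}\bigr)\,d_{X_{J\setminus K}}\sigma|_{p}\]
(in the paper this appears with $g^{-1}(p)d_{X_{K}}g\sigma$ in place of $(D_{X_{K}}\sigma)|_{p}$), and your base case is fine. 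The genuine gap is the inductive step: the passage from this identity to the asserted single-$\theta$ identity is not proved but announced. Phrases such as ``these are seen to cancel'' and ``I expect the cleanest route is an auxiliary induction'' stand in for precisely the content of the lemma; expanding the composite $D_{Y}D_{X_{1}}\cdots D_{X_{k-1}}$ produces not only $\theta(Y)d_{X_{S_{1,k-1}}}\sigma$ but a large family of terms with $\theta$ in interior slots, with two or more $\theta$-factors, and with derivatives of $\theta$ arising from commuting $d_{X_{j}}$ past $\theta(X_{i})$, and nothing in your write-up verifies that after inserting the lower-order identities (and their derivatives) the residue matches the normalisation sum termwise. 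As it stands this is a plan for a computation, not a proof, at exactly the point where the work lies.

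You should also know that the paper's organisation of the induction dissolves your ``main obstacle'' entirely, and I recommend reworking your step along its lines. Since $g$ is a $(k-1)$-th order deformation, the order-$(k-1)$ agreement identity $g^{-1}d_{X_{S_{1,k-1}}}g\sigma=\sum_{K\subset S_{1,k-1}}v_{0}(g^{-1}d_{X_{K}}g\sigma)\,d_{X_{S_{1,k-1}\setminus K}}\sigma$ holds at \emph{every} point of $\Sigma$, hence may be differentiated once in an auxiliary direction $X_{0}$. Because $d_{X_{0}}g^{-1}=-\theta(X_{0})g^{-1}$, this single differentiation introduces exactly one factor of $\theta$, and comparison with the order-$k$ contact condition shows that $k$-th order agreement is equivalent to the single-$\theta$ identity~(\ref{eqn:indproof}); one then substitutes the order-$(k-1)$ agreement to rewrite each $g^{-1}(p)d_{X_{K}}g\sigma$ as $\sum_{L\subset K}v_{0}(g^{-1}(p)d_{X_{L}}g\sigma)\,d_{X_{K\setminus L}}\sigma$, relabels, and applies the strong inductive hypothesis~(\ref{eqn:indhyp}) once. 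No term with two $\theta$-factors or with $d\theta$ ever appears, so the combinatorial bookkeeping you flag as the hard part is never needed: differentiate the lower-order identity once rather than expanding the full gauge derivative $D_{X_{J}}$.
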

\begin{proof}
We shall use strong induction on $k$. Consider the case $k=1$: $F$ and $g^{-1}(p)gF$ agree to order $1$ at $p$ if and only if for any $v_{0}\in V^{*}$, $v_{0}(g^{-1}(p)g\sigma)\sigma$ and $g^{-1}(p)g\sigma$ agree to order $1$ at $p$ where $\sigma\in \Gamma F$ such that $v_{0}(\sigma)=1$. This holds if and only if for any $Y\in T_{p}\Sigma$, 
\[ g^{-1}(p)d_{Y}(g\sigma) = d_{Y}(v_{0}(g^{-1}(p)g\sigma)\sigma).\]
Now using the Leibniz rule and that $\theta_{p}(Y) = g^{-1}(p)d_{Y}g$, this holds if and only if 
\[ \theta_{p}(Y)\sigma + d_{Y}\sigma = v_{0}(\theta_{p}(Y)\sigma)\sigma + d_{Y}\sigma.\]
Noting that $d_{\emptyset}\sigma=\sigma$, we see that the proposition holds when $k=1$. 

Let $n\in\mathbb{N}$ and assume that the proposition holds for all $k< n$ and assume that $F$ and $\hat{F}$ are $(n-1)$-th order deformations of each other. Let $Y,X_{1},...,X_{n-1}\in \Gamma T\Sigma$. Then for any $K\subset \{1,...,n-1\}$ with $|K|<n-1$ we have, by our inductive hypothesis, 
\begin{equation}
\label{eqn:indhyp}
\theta(Y)d_{X_{K}}\sigma = \sum_{L\subset K}v_{0}(\theta(Y)d_{X_{L}}\sigma)d_{X_{K\backslash L}}\sigma.
\end{equation}

Since $F$ and $\hat{F}$ are $(n-1)$-th order deformations of each other we have that for any $v_{0}\in V^{*}$ and $X_{1},..., X_{n-1}\in \Gamma T\Sigma$, 
\[ g^{-1}d_{X_{S_{1,n-1}}}g\sigma - \sum_{K\subset S_{1,n-1}}v_{0}(g^{-1}d_{X_{K}}g\sigma)d_{X_{S_{1,n-1}\backslash K}}\sigma=0, \]
where $\sigma\in\Gamma f$ such that $v_{0}(\sigma)=1$. Differentiating at $p$ with respect to $X_{0}\in \Gamma T\Sigma$ we get, using the Leibniz rule and that $d_{Y}g^{-1}= - \theta(Y)g^{-1}$, 
\begin{align*}
0&=-\theta_{p}(X_{0})g^{-1}(p)d_{X_{S_{1,n-1}}}g\sigma + g^{-1}(p)d_{X_{0}}d_{X_{S_{1,n-1}}}g\sigma\\
&+ \sum_{K\subset S_{1,n-1}}[v_{0}(\theta_{p}(X_{0})g^{-1}(p)d_{X_{K}}g\sigma)d_{X_{S_{1,n-1}\backslash K}}\sigma\\ 
&-
v_{0}(g^{-1}(p)d_{X_{0}X_{K}}g\sigma)d_{X_{S_{1,n-1}\backslash K}}\sigma - v_{0}(g^{-1}(p)d_{X_{K}}g\sigma)d_{X_{0}X_{S_{1,n-1}\backslash K}}\sigma]\\
&= -\theta_{p}(X_{0})g^{-1}(p)d_{X_{S_{1,n-1}}}g\sigma +d_{X_{S_{0,n-1}}}(g^{-1}(p)g\sigma) \\
&+ \sum_{K\subset S_{1,n-1}}v_{0}(\theta_{p}(X_{0})g^{-1}(p)d_{X_{K}}g\sigma)d_{X_{S_{1,n-1}\backslash K}}\sigma - d_{X_{S_{0,n-1}}}(v_{0}(g^{-1}(p)g\sigma)\sigma).
\end{align*}
Thus, $v_{0}(g^{-1}(p)g\sigma)\sigma$ and $g^{-1}(p)g\sigma$ agree to order $n$ at $p$ if and only if 
\begin{align}
\label{eqn:indproof}
\theta_{p}(X_{0})g^{-1}(p)d_{X_{S_{1,n-1}}}g\sigma =\sum_{K\subset S_{1,n-1}}v_{0}(\theta_{p}(X_{0})g^{-1}(p)d_{X_{K}}g\sigma)d_{X_{S_{1,n-1}\backslash K}}\sigma.
\end{align}
Now, $v_{0}(g^{-1}(p)g\sigma)\sigma$ and $g^{-1}(p)g\sigma$ agree up to order $n-1$ at $p$, thus for any $K\subset S_{1,n-1}$, 
\[ g^{-1}(p)d_{X_{K}}g\sigma  = d_{X_{K}}(v_{0}(g^{-1}(p)g\sigma)\sigma) = \sum_{L\subset K}v_{0}(g^{-1}(p)d_{X_{L}}g\sigma)d_{X_{K\backslash L}}\sigma.\]
Thus, (\ref{eqn:indproof}) becomes
\begin{align*}
0&= -\theta_{p}(X_{0})\sum_{K\subset S_{1,n-1}}v_{0}(g^{-1}(p) d_{X_{K}}g\sigma)d_{X_{S_{1,n-1}\backslash K}}\sigma \\
&+\sum_{K\subset S_{1,n-1}}\sum_{L\subset K}v_{0}(\theta_{p}(X_{0})v_{0}(g^{-1}(p)d_{X_{L}}g\sigma)d_{X_{K\backslash L}}\sigma)d_{X_{S_{1,n-1}\backslash K}}\sigma\\
&= -\sum_{K\subset S_{1,n-1}}v_{0}(g^{-1}(p) d_{X_{K}}g\sigma) \theta_{p}(X_{0})d_{X_{S_{1,n-1}\backslash K}}\sigma \\
&+ \sum_{K\subset S_{1,n-1}}\sum_{L\subset K}v_{0}(g^{-1}(p)d_{X_{L}}g\sigma)v_{0}(\theta_{p}(X_{0})d_{X_{K \backslash L}}\sigma)d_{X_{S_{1,n-1}\backslash K}}\sigma.
\end{align*}
After relabelling we have that
\begin{align*}
0 &= \sum_{K\subset S_{1,n-1}}v_{0}(g^{-1}(p)d_{X_{K}}g\sigma)(-\theta_{p}(X_{0})d_{X_{S_{1,n-1}\backslash K}}\sigma\\
&+\sum_{L\subset (S_{1,n-1}\backslash K)} v_{0}(\theta_{p}(X_{0})d_{X_{L}}\sigma)d_{X_{(S_{1,n-1}\backslash K)\backslash L}}\sigma).
\end{align*}
Using the inductive hypothesis (\ref{eqn:indhyp}) we then have
\[ 0 = - \theta_{p}(X_{0})d_{X_{S_{1,n-1}}}\sigma + \sum_{K\subset S_{1,n-1}}v_{0}(\theta_{p}(X_{0})d_{X_{K}}\sigma)d_{X_{S_{1,n-1}\backslash K}}\sigma.\]
Hence, the result holds for the case $k=n$. Therefore, by induction the result is proved.
\end{proof}

Applying Lemma~\ref{lem:indproof} recursively, one obtains the following theorem:

\begin{theorem}
\label{thm:defozero}
$\eta\in \Omega^{1}(\mathfrak{g})$ is a $k$-th order infinitesimal deformation of $F$ if and only if $\eta$ satisfies the Maurer Cartan equation and for all $r\in \{0,..., k-1\}$, $v_{0}\in V^{*}$ and $Y,X_{1},...,X_{r}\in \Gamma T\Sigma$, 
\[ \eta(Y) d_{X_{S_{1,r}}}\sigma = \sum_{K\subset S_{1,r}}v_{0}(\eta(Y)d_{X_{K}}\sigma)d_{X_{S_{1,r}\backslash K}}\sigma,\]
where $\sigma\in \Gamma F$ such that $v_{0}(\sigma)=1$.
\end{theorem}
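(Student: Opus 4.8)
The plan is to reduce the statement to a clean induction on $k$ built directly on Lemma~\ref{lem:indproof}. First I would dispense with the Maurer--Cartan condition: since it appears verbatim on both sides of the claimed equivalence, I would fix once and for all an $\eta\in\Omega^1(\mathfrak g)$ satisfying Maurer--Cartan and choose (locally) some $g\colon\Sigma\to G$ with $g^{-1}dg=\eta$, whose existence is exactly guaranteed by Maurer--Cartan. By definition, $\eta$ is a $k$-th order infinitesimal deformation precisely when such a $g$ is a $k$-th order $G$-deformation of $F$, i.e.\ when $g^{-1}(p)gF$ agrees with $F$ to order $k$ at every $p\in\Sigma$. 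I would also remark at the outset that the displayed identities involve only $\eta$, $\sigma$ and $v_0$ and not $g$ itself, so the condition is manifestly independent of the chosen $g$; this is what makes the ``for any $g$'' in the definition of infinitesimal deformation coherent.

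The core reduction is the observation that agreement to order $k$ entails agreement to every lower order. Hence $g$ is a $k$-th order $G$-deformation of $F$ if and only if it is a $(k-1)$-th order $G$-deformation and, in addition, $g^{-1}(p)gF$ agrees with $F$ to order $k$ at each $p$. This sets up an induction on $k$ whose base case is $k=0$: there the index set $\{0,\dots,k-1\}$ is empty, and $g$ is automatically a $0$-th order deformation because $g^{-1}(p)g(p)F(p)=F(p)$, so both sides hold vacuously.

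For the inductive step I would assume the theorem for $k-1$, so that ``$g$ is a $(k-1)$-th order $G$-deformation'' is equivalent to the displayed identity holding for all $r\in\{0,\dots,k-2\}$. Under this hypothesis Lemma~\ref{lem:indproof} applies with $\theta=g^{-1}dg=\eta$ and tells us that $g^{-1}(p)gF$ agrees with $F$ to order $k$ at $p$ if and only if the displayed identity holds at $p$ in the single case $r=k-1$ (with variables $Y,X_1,\dots,X_{k-1}$). Combining the inductive hypothesis with this one new instance yields exactly the identities for all $r\in\{0,\dots,k-1\}$, completing the induction.

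The analytic substance has already been absorbed into Lemma~\ref{lem:indproof}, so the remaining work is essentially bookkeeping. The one point requiring care is verifying that the hypothesis of Lemma~\ref{lem:indproof} --- namely that $g$ is a genuine $(k-1)$-th order deformation --- is in force at each stage, which is precisely what the induction supplies, together with confirming that the recursion bottoms out correctly at $k=0$. I expect no genuine obstacle beyond carefully tracking the index shift between the ``$k$'' of Lemma~\ref{lem:indproof} (which governs the passage from order $k-1$ to order $k$) and the range $r\in\{0,\dots,k-1\}$ appearing in the theorem.
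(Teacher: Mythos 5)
Your proposal is correct and takes essentially the same route as the paper: the paper proves Theorem~\ref{thm:defozero} simply by noting that it follows from ``applying Lemma~\ref{lem:indproof} recursively,'' and your induction on $k$ --- with the vacuous base case $k=0$, the observation that order-$k$ agreement implies order-$(k-1)$ agreement, and Lemma~\ref{lem:indproof} supplying the single new identity $r=k-1$ at each step --- is precisely that recursion made explicit. Your preliminary remarks (local integrability of $\eta$ via Maurer--Cartan and independence of the choice of $g$, since two primitives differ by left multiplication by a constant $A\in G$) are sound and consistent with the paper's implicit treatment.
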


We now wish to find an invariant characterisation of deformability in terms of the Maurer-Cartan form, i.e., a characterisation that does not require charts. Essentially this is achieved by taking the characterisation of Theorem~\ref{thm:defozero} and successively applying the Leibniz rule. Let $r\in\{0,..., k-1\}$, $Y,X_{1},...,X_{r}\in\Gamma T\Sigma$ and $v_{0}\in V^{*}$. For $I,J\subset \{1,...,r\}$, contemplate the following equation:
\begin{eqnarray}
\label{eqn:defo}
(d_{X_{I}}\eta(Y))d_{X_{J}}\sigma = \sum_{K\subset J} v_{0}((d_{X_{I}}\eta(Y))d_{X_{K}}\sigma )d_{X_{J\backslash K}}\sigma,
\end{eqnarray}
where $\sigma\in\Gamma F$ such that $v_{0}(\sigma)=1$. 

\begin{lemma}
Suppose that for all $I,J\subset \{1,..., r\}$ with $|I|+|J| < r$, (\ref{eqn:defo}) holds. Then~(\ref{eqn:defo}) holds for all $I,J\subset\{1,...,r\}$ with $|I|=i\in\{0,,...,r\}$ and $|I|+|J|=r$ if and only if~(\ref{eqn:defo}) holds for all $I,J\subset\{1,...,r\}$ with $|I|=i+1$ and $|I|+|J|=r$ . 
\end{lemma}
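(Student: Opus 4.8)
The plan is to prove this equivalence by taking equation~(\ref{eqn:defo}) at the level $|I|=i$ (with $|I|+|J|=r$) and differentiating it along a new vector field, then using the Leibniz rule to reorganise the resulting terms. The idea is that differentiating an identity with $|I|=i$ should produce, among other things, the identity with $|I|=i+1$, while all the ``error'' terms generated by the differentiation are controlled by the hypothesised lower-order identities ($|I|+|J|<r$). This mirrors exactly the inductive mechanism already used in the proof of Lemma~\ref{lem:indproof}, where differentiating a $k$-th order contact identity and cleaning up with the Leibniz rule and a previous case yielded the next identity.

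Let me set up the computation. Fix $I,J\subset\{1,\dots,r\}$ with $|I|=i$ and $|I|+|J|=r$, and let $X_0$ be a fresh vector field (with index not in $I\cup J$). Write equation~(\ref{eqn:defo}) for this $I,J$ and apply $d_{X_0}$ to both sides. On the left I get
\[
(d_{X_{I\cup\{0\}}}\eta(Y))\,d_{X_J}\sigma \;+\; (d_{X_I}\eta(Y))\,d_{X_{J\cup\{0\}}}\sigma,
\]
while the right-hand side, after applying the Leibniz rule to each summand $v_0((d_{X_I}\eta(Y))d_{X_K}\sigma)\,d_{X_{J\setminus K}}\sigma$, expands into four families of terms according to whether $X_0$ falls on the $\eta$-factor, on $d_{X_K}\sigma$ inside the $v_0$, or on the outer $d_{X_{J\setminus K}}\sigma$. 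The first key observation is that the term $(d_{X_I}\eta(Y))d_{X_{J\cup\{0\}}}\sigma$ on the left is matched (via the hypotheses) by the terms in which $X_0$ lands on $d_{X_{J\setminus K}}\sigma$ and on $d_{X_K}\sigma$: these are precisely instances of~(\ref{eqn:defo}) at the same level $|I|=i$ but with strictly smaller $|J|$, hence covered by the assumption $|I|+|J|<r$. After cancelling these, what remains is exactly the difference between~(\ref{eqn:defo}) at level $i+1$ (namely with $I\cup\{0\}$) and the terms where $X_0$ differentiates the $\eta$-factor inside $v_0$; a reindexing sends the latter to the level-$(i+1)$ right-hand side.

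The bookkeeping is most cleanly done by treating the whole argument as a chain of equivalences: differentiating~(\ref{eqn:defo}) at level $i$ is a reversible operation modulo the lower-order identities, so the level-$i$ identity (together with the hypotheses) holds if and only if the level-$(i+1)$ identity (together with the hypotheses) holds. Concretely, I would rearrange the differentiated equation into the form
\[
0 \;=\; \Bigl[(d_{X_{I\cup\{0\}}}\eta(Y))d_{X_J}\sigma - \sum_{K\subset J}v_0\bigl((d_{X_{I\cup\{0\}}}\eta(Y))d_{X_K}\sigma\bigr)d_{X_{J\setminus K}}\sigma\Bigr] + (\text{terms annihilated by the hypotheses}),
\]
so that the bracket---which is the level-$(i+1)$ expression---vanishes if and only if the original level-$i$ equation held. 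Both $I$ and $J$ range over the appropriate subsets, and summing/specialising over all choices of $X_0$ recovers every instance needed at level $i+1$.

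**The main obstacle** I expect is purely combinatorial rather than conceptual: correctly identifying, among the several families of terms produced by the Leibniz expansion, exactly which ones are instances of~(\ref{eqn:defo}) with $|I|+|J|<r$ (and therefore vanish by hypothesis) versus which ones reassemble into the level-$(i+1)$ identity. The risk is a double-counting or an off-by-one in the subset indices when $X_0$ is distributed across the product $(d_{X_I}\eta(Y))\otimes d_{X_K}\sigma\otimes d_{X_{J\setminus K}}\sigma$; keeping the index sets $I,J,K$ rigorously disjoint from $\{0\}$ and tracking the Leibniz splitting carefully, as in the proof of Lemma~\ref{lem:indproof}, is what makes the cancellations transparent. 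Once the terms are correctly sorted, the equivalence is immediate because every step is reversible given the standing hypotheses.
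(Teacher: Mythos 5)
Your plan (differentiate an instance of~(\ref{eqn:defo}), expand with the Leibniz rule, and cancel against available identities) is the right mechanism, but you differentiate the \emph{wrong} instance, and this produces exactly the off-by-one you flagged as the main risk. Differentiating~(\ref{eqn:defo}) for $(I,J)$ with $|I|=i$, $|I|+|J|=r$ along a fresh $X_{0}$ gives
\begin{align*}
&(d_{X_{\{0\}\cup I}}\eta(Y))d_{X_{J}}\sigma + (d_{X_{I}}\eta(Y))d_{X_{\{0\}\cup J}}\sigma\\
&\quad= \sum_{K\subset J} v_{0}((d_{X_{\{0\}\cup I}}\eta(Y))d_{X_{K}}\sigma)d_{X_{J\backslash K}}\sigma
+ \sum_{L\subset \{0\}\cup J} v_{0}((d_{X_{I}}\eta(Y))d_{X_{L}}\sigma)d_{X_{(\{0\}\cup J)\backslash L}}\sigma,
\end{align*}
and the cancellation you invoke---matching $(d_{X_{I}}\eta(Y))d_{X_{\{0\}\cup J}}\sigma$ against the terms where $X_{0}$ lands on $d_{X_{K}}\sigma$ or on $d_{X_{J\backslash K}}\sigma$---is precisely the instance of~(\ref{eqn:defo}) for the pair $(I,\{0\}\cup J)$, which has $|I|+|\{0\}\cup J|=r+1$. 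Your claim that these terms are instances ``at the same level $|I|=i$ but with strictly smaller $|J|$, hence covered by the assumption $|I|+|J|<r$'' is false: the set $J$ has grown, not shrunk, and the needed identity at total order $r+1$ is covered neither by the standing hypothesis (total order $<r$) nor by the assumed direction (level $i$ at total order exactly $r$). Moreover, even if that cancellation were granted, what remains is the level-$(i+1)$ identity for $(\{0\}\cup I, J)$ at total order $r+1$, not the target identities with $|I|+|J|=r$: differentiation raises the total order by one and cannot land you back at order $r$.

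The paper's proof avoids this by differentiating the identity at total order $r-1$ rather than $r$: given a target pair $(I,J)$ with $|I|=i+1$ and $|I|+|J|=r$, it sets $a=\min I$ (arranging $\min I<\min J$), $\hat{I}:=I\backslash\{a\}$, notes that $(\hat{I},J)$ has $|\hat{I}|+|J|=r-1<r$ so~(\ref{eqn:defo}) holds for it by the standing hypothesis, and differentiates along $X_{a}$. This yields your displayed equation with $0$ replaced by $a$ and $I$ by $\hat{I}$; the extra term $(d_{X_{\hat{I}}}\eta(Y))d_{X_{\{a\}\cup J}}\sigma$ together with its matching sum is now the instance $(\hat{I},\{a\}\cup J)$ of level $i$ and total order exactly $r$---precisely what the assumed direction supplies---so it cancels and leaves the level-$(i+1)$ instance $(I,J)$ at total order $r$. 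The converse is a symmetric argument removing $\min J$ instead. So the fix is to draw the differentiation index from the target index set and start from the order-$(r-1)$ identity guaranteed by the hypothesis, not from the order-$r$ identity with a fresh index. Note also that your ``reversible chain of equivalences'' framing does not stand on its own: differentiating an identity is not an invertible operation, and the equivalence in the lemma is obtained by two separate one-directional arguments, as in the paper.
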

\begin{proof}
Suppose that~(\ref{eqn:defo}) holds for all $I,J\subset\{1,...,r\}$ with $|I|=i\in\{0,,...,r\}$ and $|I|+|J|=r$. Let $I,J\subset \{1,...,r\}$ with $|I|=i+1$ and $|I|+|J|=r$. Without loss of generality, assume that $\min I< \min J$. Let $a$ denote the smallest element of $I$ and $\hat{I}:= I\backslash\{a\}$. Then by our assumption 
\[ (d_{X_{\hat{I}}}\eta(Y))d_{X_{J}}\sigma = \sum_{K\subset J} v_{0}((d_{X_{\hat{I}}}\eta(Y))d_{X_{K}}\sigma )d_{X_{J\backslash K}}\sigma.\]
Differentiating this with respect to $X_{a}$ at $p$ and using the Leibniz rule we have that
\begin{align*}
&(d_{X_{I}}\eta(Y))d_{X_{J}}\sigma + (d_{X_{\hat{I}}}\eta(Y))d_{X_{\{a\}\cup J}}\sigma \\
=&  \sum_{K\subset J} (v_{0}((d_{X_{I}}\eta(Y))d_{X_{K}}\sigma )d_{X_{J\backslash K}}\sigma  +  v_{0}((d_{X_{\hat{I}}}\eta(Y))d_{X_{\{a\}\cup K}}\sigma )d_{X_{J\backslash K}}\sigma\\
+&  \sum_{K\subset J} v_{0}((d_{X_{\hat{I}}}\eta(Y))d_{X_{K}}\sigma )d_{X_{\{a\}\cup J\backslash K}}\sigma)\\
=&\sum_{K\subset J} v_{0}((d_{X_{I}}\eta(Y))d_{X_{K}}\sigma )d_{X_{J\backslash K}}\sigma  + \sum_{L\subset \{a\}\cup J} v_{0}((d_{X_{\hat{I}}}\eta(Y))d_{X_{L}}\sigma )d_{X_{\{a\}\cup J\backslash L}}\sigma.
\end{align*}
By our supposition, 
\[ (d_{X_{\hat{I}}}\eta(Y))d_{X_{\{a\}\cup J}}\sigma= \sum_{L\subset \{a\}\cup J} v_{0}((d_{X_{\hat{I}}}\eta(Y))d_{X_{L}}\sigma )d_{X_{\{a\}\cup J\backslash L}}\sigma.\]
Thus, 
\[ (d_{X_{I}}\eta(Y))d_{X_{J}}\sigma = \sum_{K\subset J} (v_{0}((d_{X_{I}}\theta(Y))d_{X_{K}}\sigma )d_{X_{J\backslash K}}\sigma.\]

A similar argument can be used to prove the converse. 
\end{proof}

\begin{corollary}
\label{cor:diffset}
Suppose that for all $I,J\subset \{1,..., r\}$ with $|I|+|J| < r$, (\ref{eqn:defo}) holds. Then if~(\ref{eqn:defo}) holds for all $I,J\subset\{1,...,r\}$ with $|I|=i\in\{0,,...,r\}$ and $|I|+|J|=r$, then~(\ref{eqn:defo}) holds for all $I,J\subset\{1,...,r\}$ with $|I|+|J|=r$. 
\end{corollary}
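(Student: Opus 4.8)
The plan is to use the preceding lemma as the inductive step and to propagate the hypothesis in \emph{both} directions across the layers indexed by the cardinality $|I|$. First I would observe that for $I,J\subset\{1,\dots,r\}$ with $|I|+|J|=r$, the quantity $|I|=i$ ranges over all of $\{0,1,\dots,r\}$, with $|J|=r-i$ then determined. Hence establishing (\ref{eqn:defo}) for every such pair $I,J$ is equivalent to establishing it for each ``layer'' $i\in\{0,\dots,r\}$ separately, and the conclusion of the corollary is precisely the assertion that all layers hold.

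Next I would invoke the preceding lemma, whose standing hypothesis (that (\ref{eqn:defo}) holds for all $I,J$ with $|I|+|J|<r$) is exactly the standing hypothesis of the corollary. For each $i\in\{0,\dots,r-1\}$ the lemma supplies a biconditional: (\ref{eqn:defo}) holds for all $I,J$ with $|I|=i$ and $|I|+|J|=r$ if and only if it holds for all $I,J$ with $|I|=i+1$ and $|I|+|J|=r$. The corollary assumes (\ref{eqn:defo}) at one fixed layer $|I|=i$. Applying the forward implication of the lemma repeatedly yields the layers $i+1,\,i+2,\,\dots,\,r$, while applying the reverse implication repeatedly yields the layers $i-1,\,i-2,\,\dots,\,0$. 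Together these sweep out every value in $\{0,\dots,r\}$, so (\ref{eqn:defo}) holds for all $I,J$ with $|I|+|J|=r$, as required.

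I do not anticipate a genuine obstacle: all of the analytic content resides in the lemma, and the corollary is a routine two-sided induction on $i$. The only point demanding care is that the lemma is stated as an \emph{equivalence}, so both the forward and the reverse directions must be used; invoking only the forward implication would propagate the property to the layers above the starting value $i$ but would leave the layers below $i$ unestablished.
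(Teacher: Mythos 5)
Your proposal is correct and matches the paper's (implicit) argument exactly: the corollary is stated without proof precisely because it follows by iterating the preceding lemma's equivalence, sweeping from the given layer $|I|=i$ up to $r$ via the forward implications and down to $0$ via the reverse implications, with the corollary's standing hypothesis validating each application of the lemma. Your observation that \emph{both} directions of the biconditional are needed is exactly the right point of care.
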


We are now in a position to state the following invariant version of Theorem~\ref{thm:defozero}:

\begin{theorem}
\label{thm:defo}
$\eta\in \Omega^{1}(\mathfrak{g})$ is a $k$-th order infinitesimal deformation of $F:\Sigma\to S$ if and only if $\eta$ satisfies the Maurer-Cartan equation and
\begin{equation}
\label{eqn:nicedefo}
\eta(Y)F\le F,\quad (d_{X_{1}}\eta(Y))F\le F, \quad ...\quad, (d_{X_{1}...X_{k-1}}\eta(Y)) F\le F,
\end{equation}
for all $Y, X_{1},...,X_{k-1},\in \Gamma T\Sigma$. 
\end{theorem}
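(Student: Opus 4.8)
The plan is to recognise both the chart-dependent condition of Theorem~\ref{thm:defozero} and the invariant condition~(\ref{eqn:nicedefo}) as two extreme instances of the single family of equations~(\ref{eqn:defo}), and then to pass between them using Corollary~\ref{cor:diffset}. The first step is to make the two identifications explicit. On the one hand, setting $J=\emptyset$ in~(\ref{eqn:defo}) leaves only the term $K=\emptyset$ on the right-hand side, so the equation collapses to
\[ (d_{X_{I}}\eta(Y))\sigma = v_{0}\big((d_{X_{I}}\eta(Y))\sigma\big)\,\sigma. \]
This asserts precisely that $(d_{X_{I}}\eta(Y))\sigma$ is a pointwise multiple of $\sigma$, i.e.\ that the endomorphism $d_{X_{I}}\eta(Y)$ preserves the line $F$; conversely, if $(d_{X_{I}}\eta(Y))F\le F$ then applying $v_{0}$ recovers the displayed equation. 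Hence~(\ref{eqn:defo}) with $J=\emptyset$ is equivalent to $(d_{X_{I}}\eta(Y))F\le F$, manifestly independent of the auxiliary choices of $v_{0}$ and $\sigma$. On the other hand, setting $I=\emptyset$ in~(\ref{eqn:defo}) reproduces verbatim the condition of Theorem~\ref{thm:defozero}. Thus~(\ref{eqn:nicedefo}) is the ``$J=\emptyset$ slice'' of~(\ref{eqn:defo}) and the condition of Theorem~\ref{thm:defozero} is the ``$I=\emptyset$ slice.''

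With these identifications in hand, I would show that each slice is equivalent to the full system ``(\ref{eqn:defo}) holds for all $I,J$ with $|I|+|J|\le k-1$'' by induction on the total degree $r=|I|+|J|$. At each level $r$ the inductive hypothesis supplies~(\ref{eqn:defo}) for every $I,J$ with $|I|+|J|<r$, which is exactly the standing assumption of Corollary~\ref{cor:diffset}; the base case $r=0$ is the single equation $\eta(Y)F\le F$, common to both conditions. For the forward direction, Theorem~\ref{thm:defozero} furnishes the slice $|I|=0$ at level $r$, so Corollary~\ref{cor:diffset} with $i=0$ upgrades this to all of level $r$. Iterating up to $r=k-1$ yields the full system, and restricting to $J=\emptyset$ gives~(\ref{eqn:nicedefo}). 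For the converse, condition~(\ref{eqn:nicedefo}) furnishes the slice $J=\emptyset$, i.e.\ $|I|=r$, at level $r$, so Corollary~\ref{cor:diffset} with $i=r$ again fills out all of level $r$; restricting the resulting full system to $I=\emptyset$ recovers the condition of Theorem~\ref{thm:defozero}, which by that theorem is equivalent to $\eta$ being a $k$-th order infinitesimal deformation. Throughout, the Maurer--Cartan hypothesis appears on both sides and is carried along untouched.

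I expect the main obstacle to be purely organisational rather than computational. One must check that the extreme slices really are single, unambiguous instances of~(\ref{eqn:defo}) at each level: for $I=\emptyset,\ |J|=r$ and for $J=\emptyset,\ |I|=r$ the complementary index set inside $\{1,\dots,r\}$ is forced, so there is a unique subset to consider, while the arbitrariness of the vector fields $X_{1},\dots,X_{r}$ accounts for all choices. One must also seed the induction so that the hypothesis of Corollary~\ref{cor:diffset}, namely the validity of~(\ref{eqn:defo}) at all lower total degrees, is genuinely available for the fixed collection of vector fields before the corollary is invoked; this is ensured by applying the lower-degree inductive statements to the appropriate subcollections of $X_{1},\dots,X_{r}$. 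No new computation beyond the Leibniz-rule manipulations already carried out in the preceding lemma should be required.
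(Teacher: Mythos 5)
Your proposal is correct and follows essentially the same route as the paper: identify~(\ref{eqn:nicedefo}) with the $J=\emptyset$ slice of~(\ref{eqn:defo}) and the condition of Theorem~\ref{thm:defozero} with the $I=\emptyset$ slice, then pass between them via Corollary~\ref{cor:diffset}. Your only addition is to spell out the induction on the total degree $r=|I|+|J|$ that supplies the corollary's lower-degree hypothesis (applying lower-level statements to subcollections of the vector fields), a point the paper's proof leaves implicit.
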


\begin{proof}
Firstly, notice that~(\ref{eqn:nicedefo}) is equivalent to~(\ref{eqn:defo}) with $|I|=r\in\{0,...,k-1\}$ and $|J|=0$, for any choice of $v_{0}\in V^{*}$. 

Suppose that $\eta$ is a $k$-th order infinitesimal deformation of $F$. Then by Theorem~\ref{thm:defozero}, for any $r\in\{0,...,k-1\}$, $Y, X_{1},..., X_{r}\in\Gamma T\Sigma$ and $v_{0}\in V^{*}$, we have that~(\ref{eqn:defo}) holds for all $I,J\subset\{1,...,r\}$ with $|I|=0$ and $|J|=r$. By Corollary~\ref{cor:diffset} it then follows that~(\ref{eqn:defo}) holds for all $I,J\subset\{1,...,r\}$ with $|I|=r$ and $|J|=0$. 

Conversely, suppose that $\eta$ satisfies the Maurer-Cartan equation and, for any $r\in\{0,...,k-1\}$, $Y, X_{1},..., X_{r}\in\Gamma T\Sigma$ and $v_{0}\in V^{*}$,~(\ref{eqn:defo}) holds for all $I,J\subset\{1,...,r\}$ with $|I|=r$ and $|J|=0$. Then by Corollary~\ref{cor:diffset},~(\ref{eqn:defo}) holds for all $I,J\subset\{1,...,r\}$ with $|I|=0$ and $|J|=r$. By Theorem~\ref{thm:defozero} it then follows that $\eta$ is a $k$-th order infinitesimal deformation of $F$. 
\end{proof}

\section{Projective $3$-space}
\label{sec:proj3}
Cartan~\cite{C1920ii} investigated projective deformability and rigidity of surfaces in projective $3$-space. Modern references on this topic include~\cite{AG1993,F1937,GH1979,JM1994}. It was shown in~\cite{F1937} that the class of second order deformable surfaces in projective $3$-space can be split naturally into two subclasses: $R$- and $R_{0}$-surfaces. A modern account of this can be found in~\cite{F2000i} and a gauge theoretic approach for these surfaces was developed in~\cite{C2012i}. In this section we will use the results from Section~\ref{sec:defo} to study these notions.

Consider projective 3-space $\mathbb{P}(\mathbb{R}^{4})$ with transformation group $\textrm{SL}(4)$. Suppose that $\Sigma$ is a 2-dimensional manifold and let $F:\Sigma\to \mathbb{P}(\mathbb{R}^{4})$ be a smooth map. We can view $F$ as a rank 1 subbundle of the trivial bundle $\underline{\mathbb{R}}^{4}:=\Sigma\times \mathbb{R}^{4}$. Let $F^{(1)}$ denote derived bundle of $F$, i.e., the set of sections of $F$ and derivatives of sections of $F$. Assuming that $F$ is an immersion is equivalent to assuming that $F^{(1)}$ is a rank 3 subbundle of the trivial bundle. Let $T_{1}, T_{2}$ denote the (possibly complex conjugate) asymptotic directions of $F$, i.e., for any $X\in \Gamma T_{1}$, $Y\in \Gamma T_{2}$ and $\sigma\in \Gamma F$, 
\[ d_{X}d_{X}\sigma, d_{Y}d_{Y}\sigma \in \Gamma F^{(1)}.\]
We will make the further assumption that the derived bundle $F^{(2)}$ of $F^{(1)}$ satisfies $F^{(2)} = \underline{\mathbb{R}}^{4}$. In other words, for nowhere zero sections $X\in \Gamma T_{1}$, $Y\in \Gamma T_{2}$ and $\sigma\in \Gamma F$, $d_{X}d_{Y}\sigma$ never belongs to $F^{(1)}$.

\subsection{Second order deformations}
\label{subsec:projsord}
We will now investigate when $F$ admits non-trivial second order $\textrm{SL}(4)$-deformations. By Theorem~\ref{thm:defozero}, $\eta\in \Omega^{1}(\underline{\mathfrak{sl}(4)})$ is a second order infinitesimal deformation of $F$ if and only if $\eta$ satisfies the Maurer-Cartan equation and for all $v_{0}\in (\mathbb{R}^{4})^{*}$ and $X,Y\in\Gamma T\Sigma$
\begin{equation}
\label{eqn:projsord}
\eta \sigma = v_{0}(\eta \sigma)\sigma
\end{equation}
and 
\begin{equation}
\label{eqn:projsord2}
\eta(X)d_{Y}\sigma = v_{0}(\eta(X)\sigma) d_{Y}\sigma + v_{0}(\eta(X)d_{Y}\sigma)\sigma, 
\end{equation}
where $\sigma \in \Gamma F$ such that $v_{0}(\sigma)=1$. 

Suppose that $\eta$ is such a second order infinitesimal deformation. Let $X\in \Gamma T_{1}$ and $Y\in \Gamma T_{2}$. By equation~(\ref{eqn:projsord2}) we have that 
\[ \eta(X)d_{X}\sigma = v_{0}(\eta(X)\sigma) d_{X}\sigma + v_{0}(\eta(X)d_{X}\sigma)\sigma.\]
Differentiating this in the $Y$ direction gives 
\begin{align*}  
(d_{Y}\eta(X))d_{X}\sigma + \eta(X)d_{YX}\sigma &= d_{Y}(v_{0}(\eta(X)\sigma)) d_{X}\sigma  + v_{0}(\eta(X)\sigma) d_{YX}\sigma \\
&+ d_{Y}(v_{0}(\eta(X)d_{X}\sigma))\sigma+ v_{0}(\eta(X)d_{X}\sigma)d_{Y}\sigma.
\end{align*}
Since $\eta$ satisfies the Maurer-Cartan equation, one deduces that the left hand side of this equation is
\[
\eta(X)d_{YX}\sigma \, mod\, F^{(1)}.
\]
Whereas the right hand side is 
\[ v_{0}(\eta(X)\sigma)d_{YX}\sigma\, mod\, F^{(1)}.\]
Similarly, one can show that 
\[ \eta(Y)d_{YX}\sigma = v_{0}(\eta(Y)\sigma)d_{YX}\sigma\, mod\, F^{(1)}.\]
Therefore, in terms of the basis $\{ \sigma, d_{X}\sigma, d_{Y}\sigma, d_{YX}\sigma\}$ for $\underline{\mathbb{R}}^{4}$, $\eta(Y)$ is represented by an upper triangular matrix with $v_{0}(\eta(Y)\sigma)$ on the diagonal. Now since $\eta$ takes values in $\mathfrak{sl}(4)$ and is thus trace free, we must have that $v_{0}(\eta \sigma)=0$. Therefore, 
\[ \eta F =0 \quad \text{and}\quad \eta F^{(1)}\le \Omega^{1}(F).\]
Conversely if $\eta$ satisfies   
\[ \eta F =0 \quad \text{and}\quad \eta F^{(1)}\le \Omega^{1}(F) \]
then clearly (\ref{eqn:projsord}) and (\ref{eqn:projsord2}) hold and thus $\eta$ is a second order infinitesimal deformation of $F$. 

One can show (see \cite[Lemma 3.21]{P2015}) that an $\eta \in \Omega^{1}(\underline{\mathfrak{sl}(4)})$ of the above form satisfies the Maurer-Cartan equation if and only if $\eta$ is closed. Thus, we have arrived at the following proposition:

\begin{proposition}
\label{prop:projsord}
$\eta \in \Omega^{1}(\underline{\mathfrak{sl}(4)})$ is a second order infinitesimal deformation of $F$ if and only if $\eta$ is closed and satisfies
$\eta F =0$ and $\eta F^{(1)}\le \Omega^{1}(F)$.

\end{proposition}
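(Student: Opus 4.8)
The plan is to invoke Theorem~\ref{thm:defozero} in the case $k=2$, which reduces the statement to the two pointwise relations~(\ref{eqn:projsord}) and~(\ref{eqn:projsord2}). First I would observe that~(\ref{eqn:projsord}) is precisely the condition $\eta F\le F$, i.e.\ $\eta\sigma = v_{0}(\eta\sigma)\sigma$, while~(\ref{eqn:projsord2}) shows $\eta(X)d_{Y}\sigma\in\Gamma F^{(1)}$ for all $X,Y$, so that $\eta$ preserves the flag $F\le F^{(1)}$. The entire content of the forward direction then lies in upgrading $\eta F\le F$ to $\eta F=0$: once $v_{0}(\eta\sigma)=0$ is known, $\eta\sigma=0$ and~(\ref{eqn:projsord2}) collapses to $\eta(X)d_{Y}\sigma=v_{0}(\eta(X)d_{Y}\sigma)\sigma\in\Gamma F$, giving both $\eta F=0$ and $\eta F^{(1)}\le\Omega^{1}(F)$ at once.

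To prove $v_{0}(\eta\sigma)=0$ I would work in the frame $\{\sigma, d_{X}\sigma, d_{Y}\sigma, d_{YX}\sigma\}$ furnished by the asymptotic directions $X\in\Gamma T_{1}$, $Y\in\Gamma T_{2}$; this is a genuine basis of $\mathbb{R}^{4}$ precisely because of the standing assumption $F^{(2)}=\underline{\mathbb{R}}^{4}$. Relations~(\ref{eqn:projsord}) and~(\ref{eqn:projsord2}) already pin down the diagonal entries of $\eta(X)$ on $\sigma,d_{X}\sigma,d_{Y}\sigma$, each equal to $v_{0}(\eta(X)\sigma)$. The remaining entry is extracted by differentiating the $\eta(X)d_{X}\sigma$-instance of~(\ref{eqn:projsord2}) along $Y$ and reducing modulo $F^{(1)}$, which should yield $\eta(X)d_{YX}\sigma\equiv v_{0}(\eta(X)\sigma)d_{YX}\sigma$; the symmetric computation gives the analogous statement for $\eta(Y)$. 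Since all four diagonal entries of $\eta(X)$ then equal $v_{0}(\eta(X)\sigma)$, trace-freeness of $\mathfrak{sl}(4)$ forces $4\,v_{0}(\eta(X)\sigma)=0$, hence $v_{0}(\eta\sigma)=0$.

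The converse is immediate: if $\eta F=0$ and $\eta F^{(1)}\le\Omega^{1}(F)$, then $v_{0}(\eta\sigma)=0$ and $\eta(X)d_{Y}\sigma\in\Gamma F$, so both~(\ref{eqn:projsord}) and~(\ref{eqn:projsord2}) hold and Theorem~\ref{thm:defozero} identifies $\eta$ as a second order infinitesimal deformation. For the equivalence between the Maurer-Cartan equation and closedness I would appeal to~\cite[Lemma 3.21]{P2015}: for $\eta$ of the established form the curvature term $[\eta\wedge\eta]$ is controlled by the flag structure in such a way that the Maurer-Cartan equation reduces to $d\eta=0$.

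I expect the main obstacle to be the reduction modulo $F^{(1)}$ in the second paragraph, namely showing that $(d_{Y}\eta(X))d_{X}\sigma$ lies in $\Gamma F^{(1)}$. This is where the Maurer-Cartan equation genuinely enters: rewriting $d_{Y}\eta(X)$ in terms of $d_{X}\eta(Y)$, $\eta([X,Y])$ and $[\eta(X),\eta(Y)]$, the last two terms land in $F^{(1)}$ because $\eta$ preserves $F^{(1)}$, while the surviving term $(d_{X}\eta(Y))d_{X}\sigma$ is forced into $F^{(1)}$ by differentiating $\eta(Y)d_{X}\sigma\in\Gamma F^{(1)}$ along $X$ and invoking the asymptotic condition $d_{XX}\sigma\in\Gamma F^{(1)}$. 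It is this interplay between the structure equation and the asymptotic geometry of $F$ that makes the step the crux of the argument.
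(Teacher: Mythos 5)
Your proposal is correct and takes essentially the same route as the paper: reduce via Theorem~\ref{thm:defozero} with $k=2$ to the relations~(\ref{eqn:projsord}) and~(\ref{eqn:projsord2}), differentiate the asymptotic instance of~(\ref{eqn:projsord2}) along the other asymptotic direction, read off the diagonal entries in the frame $\{\sigma, d_{X}\sigma, d_{Y}\sigma, d_{YX}\sigma\}$ so that trace-freeness of $\mathfrak{sl}(4)$ forces $v_{0}(\eta\sigma)=0$, and appeal to \cite[Lemma 3.21]{P2015} for the equivalence of the Maurer--Cartan equation with closedness. Your final paragraph simply makes explicit the step the paper compresses into ``Since $\eta$ satisfies the Maurer-Cartan equation, one deduces\ldots'', and does so correctly, using the explicit form of $\eta(Y)d_{X}\sigma$ from~(\ref{eqn:projsord2}) together with the asymptotic condition $d_{XX}\sigma\in\Gamma F^{(1)}$.
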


We will now investigate the uniqueness and triviality of second order deformations. According to Lemma~\ref{lem:defouniq} and Lemma~\ref{lem:defotriv}, this is determined by maps $h:\Sigma\to \textrm{SL}(4)$ such that $F$ is a second order deformation of itself via $h$. By Proposition~\ref{prop:projsord}, such a $h$ satisfies 
\begin{equation}
\label{eqn:projuniq}
hF=F, \quad \theta_{h}F =0 \quad \text{and} \quad \theta_{h}F^{(1)}\le \Omega^{1}(F),
\end{equation}
where $\theta_{h}:=h^{-1}dh$. Now $hF=F$ implies that for any $\sigma\in\Gamma F$, 
\[ h\sigma =\lambda \sigma\]
for a smooth function $\lambda$. 
Thus, for any $X\in\Gamma T\Sigma$
\[ (d_{X}h)\sigma + h d_{X}\sigma =\lambda d_{X}\sigma + (d_{X}\lambda)\sigma.\]
Using that $\theta_{h}F=0$ 
\[ h d_{X}\sigma =\lambda d_{X}\sigma + (d_{X}\lambda)\sigma.\]
Differentiating this condition with respect to $Y\in\Gamma T\Sigma$ we have that
\begin{align*} 
h d_{YX}\sigma &= \lambda d_{YX}\sigma + (d_{Y}\lambda)d_{X}\sigma + (d_{X}\lambda)d_{Y}\sigma + (d_{YX}\lambda) \sigma - (d_{Y}h)d_{X}\sigma\\
&= \lambda d_{YX}\sigma \, mod\, F^{(1)}, 
\end{align*}
since $\theta_{h}F^{(1)}\le \Omega^{1}(F)$. Therefore, in terms of the basis $\{\sigma, d_{X}\sigma, d_{Y}\sigma, d_{YX}\sigma\}$, $h$ is represented by a upper triangular matrix with $\lambda$ on the diagonal. Then, since $h$ takes values in $\textrm{SL}(4)$, we must have that $\lambda= \pm 1$ and thus
\[ h|_{F^{(1)}}=\pm id|_{F^{(1)}} \quad \text{and}\quad h|_{\underline{\mathbb{R}}^{4}/F}=\pm id|_{\underline{\mathbb{R}}^{4}/F}.\]
We may then write 
\[ h = \pm (id + \xi),\]
where $\xi$ satisfies $\xi|_{F^{(1)}} = 0$ and $im\xi \le F$. Clearly $\xi$ is trace-free, so $\xi \in\Gamma \underline{\mathfrak{sl}(4)}$. Hence, $h = \pm \exp(\xi)$. Conversely, given an $h$ of such a form, one can easily check that~(\ref{eqn:projuniq}) is satisfied. Thus we obtain the following lemmata:

\begin{lemma}
Suppose that $\hat{F}$ is a second order deformation of $F$ via $g:\Sigma\to G$. Then $\hat{F}$ is a second order deformation of $F$ via $\tilde{g}:\Sigma\to G$ if and only if $\tilde{g} = \pm g\exp(\xi)$ for some $\xi \in\Gamma \underline{\mathfrak{sl}(4)}$ satisfying $\xi |_{F^{(1)}}=0$ and $im\xi \le F$.  
\end{lemma}

\begin{lemma}
$\eta$ is a trivial second order infinitesimal deformation of $F$ if and only if $\eta =d\xi$, where $\xi \in\Gamma \underline{\mathfrak{sl}(4)}$ satisfying $\xi |_{F^{(1)}}=0$ and $im\xi \le F$. 
\end{lemma}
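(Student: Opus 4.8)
The plan is to translate the description of self-deformations furnished by the preceding lemma into a statement about Maurer--Cartan forms. Recall from Lemma~\ref{lem:defotriv} that an infinitesimal deformation $\eta$ is trivial precisely when any $g$ with $g^{-1}dg=\eta$ factors as $g=Ah$ with $A\in\mathrm{SL}(4)$ constant and $h:\Sigma\to\mathrm{SL}(4)$ a second order $\mathrm{SL}(4)$-deform of $F$ into itself. Since $A$ is constant, $\eta=g^{-1}dg=h^{-1}dh$, so the lemma reduces to computing $h^{-1}dh$ for the self-deformations $h$, which by the preceding lemma are exactly $h=\pm\exp(\xi)$ with $\xi|_{F^{(1)}}=0$ and $\operatorname{im}\xi\le F$. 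Thus I expect the proof to consist of establishing the single identity $h^{-1}dh=d\xi$ and then reading off both directions.

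Two elementary facts about such $\xi$ drive the computation. First, since $\operatorname{im}\xi\le F\le F^{(1)}\le\ker\xi$, one has $\xi^{2}=0$; hence $\exp(\xi)=\mathrm{id}+\xi$ and $(\mathrm{id}+\xi)^{-1}=\mathrm{id}-\xi$, and the sign in $h=\pm\exp(\xi)$ is locally constant and so cancels in $h^{-1}dh$. Second, I claim $\xi\,d\xi=0$. To see this I would differentiate the defining condition $\operatorname{im}\xi\le F$: for any section $\tau$ of $\underline{\mathbb{R}}^{4}$ and $X\in\Gamma T\Sigma$, writing $(d_{X}\xi)\tau=d_{X}(\xi\tau)-\xi(d_{X}\tau)$ and noting that $\xi\tau\in\Gamma F$ and $\xi(d_{X}\tau)\in\Gamma F$ shows $(d_{X}\xi)\tau\in\Gamma F^{(1)}$; composing with $\xi$, which kills $F^{(1)}$, gives $\xi\,(d_{X}\xi)=0$. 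Combining the two facts, $h^{-1}dh=(\mathrm{id}-\xi)\,d\xi=d\xi-\xi\,d\xi=d\xi$, which immediately yields the forward direction.

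For the converse I would run this backwards. Given $\xi\in\Gamma\underline{\mathfrak{sl}(4)}$ with $\xi|_{F^{(1)}}=0$ and $\operatorname{im}\xi\le F$, set $h:=\exp(\xi)=\mathrm{id}+\xi$; the preceding lemma guarantees that $h$ is a second order deform of $F$ into itself, and the computation above gives $h^{-1}dh=d\xi$. Therefore $d\xi$ is the Maurer--Cartan form of an actual map into $\mathrm{SL}(4)$, so it satisfies the Maurer--Cartan equation; alternatively one checks directly, again by differentiating $\xi|_{F^{(1)}}=0$ and $\operatorname{im}\xi\le F$, that $(d\xi)F=0$ and $(d\xi)F^{(1)}\le\Omega^{1}(F)$, so that $\eta=d\xi$ meets the hypotheses of Proposition~\ref{prop:projsord} and is a second order infinitesimal deformation. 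Any $g$ with $g^{-1}dg=d\xi$ then differs from $h$ by a constant left factor, whence the deformation is trivial by Lemma~\ref{lem:defotriv}.

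The only genuinely computational point, and the one I would treat most carefully, is the identity $\xi\,d\xi=0$ (equivalently, that $\operatorname{im}(d_{X}\xi)\subseteq F^{(1)}=\ker\xi$); everything else is bookkeeping built on the preceding lemma and Lemma~\ref{lem:defotriv}. It is precisely this identity that collapses $h^{-1}dh=(\mathrm{id}-\xi)\,d\xi$ to the clean exact form $d\xi$, and it is what makes the correspondence $h=\exp(\xi)\leftrightarrow\eta=d\xi$ exact rather than merely infinitesimal.
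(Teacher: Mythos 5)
Your proof is correct and is essentially the paper's own argument: the paper obtains this lemma directly from Lemma~\ref{lem:defotriv} together with the preceding characterisation of self-deformations as $h=\pm\exp(\xi)$, leaving the key identity $h^{-1}dh=d\xi$ implicit, and your verification of it via $\xi^{2}=0$ and $\xi\,d\xi=0$ (from $im(d_{X}\xi)\le F^{(1)}\le\ker\xi$) supplies exactly that missing computation. One trivial remark: $F^{(1)}=\ker\xi$ holds only where $\xi\neq 0$, but your argument uses only the containment $F^{(1)}\le\ker\xi$, so nothing is affected.
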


We have therefore proved the main theorem of this subsection:

\begin{theorem}
\label{thm:projsord2}
$F:\Sigma\to \mathbb{P}(\mathbb{R}^{4})$ is deformable of order two if and only if there exists $\eta \in \Omega^{1}(\mathfrak{sl}(4))$, such that $\eta$ is closed,
\[ \eta F =0, \quad \eta F^{(1)}\le \Omega^{1}(F)\]
and $\eta\neq d\xi$ for any $\xi \in\Gamma \underline{\mathfrak{sl}(4)}$ satisfying $\xi |_{F^{(1)}}=0$ and $im\xi \le F$. 
\end{theorem}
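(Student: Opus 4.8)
The plan is to assemble the statement from the three ingredients already in place: Proposition~\ref{prop:projsord}, which identifies the second order infinitesimal deformations of $F$; the triviality lemma above, which pins down the trivial ones as the exact forms $d\xi$; and the definition of deformability together with the standard correspondence between infinitesimal deformations and honest $G$-valued maps. Since ``deformable of order two'' means precisely that $F$ admits a non-trivial second order $\mathrm{SL}(4)$-deformation, the task reduces to translating the existence of such a finite deformation into the existence of a suitable $\eta$, and tracking what ``non-trivial'' becomes under this translation.

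First I would prove the reverse implication. Given a closed $\eta \in \Omega^{1}(\underline{\mathfrak{sl}(4)})$ with $\eta F = 0$ and $\eta F^{(1)} \le \Omega^{1}(F)$, Proposition~\ref{prop:projsord} shows that $\eta$ is a second order infinitesimal deformation of $F$; in particular $\eta$ satisfies the Maurer--Cartan equation, using the cited equivalence between closedness and Maurer--Cartan for forms of this shape. Integrating $\eta$ on a simply connected domain yields $g:\Sigma \to \mathrm{SL}(4)$ with $g^{-1}dg = \eta$, which by the definition of an infinitesimal deformation is a second order deformation, so $\hat{F} := gF$ is a second order deform of $F$. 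By the triviality lemma above, this deform is trivial exactly when $\eta = d\xi$ for some $\xi$ with $\xi|_{F^{(1)}} = 0$ and $im\xi \le F$; since we assumed $\eta$ is of no such form, $\hat{F}$ is a non-trivial deform and $F$ is deformable of order two.

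For the forward implication I would run the same correspondence backwards. If $F$ is deformable of order two, choose a non-trivial second order deformation $g$ and set $\eta := g^{-1}dg$. Then $\eta$ is a second order infinitesimal deformation of $F$, so by Proposition~\ref{prop:projsord} it is closed and satisfies $\eta F = 0$ and $\eta F^{(1)} \le \Omega^{1}(F)$. Non-triviality of the deform, through Lemma~\ref{lem:defotriv} and the triviality lemma above, forces $\eta \neq d\xi$ for every admissible $\xi$, which is exactly the required $\eta$.

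The one genuinely delicate point is the passage between finite and infinitesimal deformations in the reverse direction: integrating $\eta$ requires the Maurer--Cartan equation and a simply connected $\Sigma$ (equivalently, one works locally and notes that deformability is a local property here). Once integrability is granted the remainder is bookkeeping, since Proposition~\ref{prop:projsord} and the triviality lemma already supply the two equivalences needed, and the only residual content is matching ``non-trivial deform'' with ``$\eta \neq d\xi$'' via Lemma~\ref{lem:defotriv}.
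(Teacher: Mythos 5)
Your proposal is correct and takes essentially the same route as the paper: Theorem~\ref{thm:projsord2} is obtained there by assembling Proposition~\ref{prop:projsord} with the preceding (unlabelled) uniqueness and triviality lemmata through Lemmas~\ref{lem:defouniq} and~\ref{lem:defotriv}, exactly the bookkeeping you describe. Your explicit caveat about integrating $\eta$ via the Maurer--Cartan equation on a simply connected domain addresses a point the paper leaves implicit, and is a welcome refinement rather than a deviation.
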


In Section~\ref{sec:projrev} we shall see that the deformability of a map into $\mathbb{P}(\mathbb{R}^{4})$ coincides with deformability of its contact lift. In that setting the triviality of deformations can be identified by the vanishing of a certain two-tensor. After this equivalence is established, we see that second order deformability coincides with the gauge theoretic definition of \textit{projectively applicability} given in~\cite{C2012i}.

\subsection{Third order deformations}
We shall now show that rigidity occurs at third order in projective 3-space. Suppose that $\eta$ is a third order infinitesimal deformation of $F$. Then by Theorem~\ref{thm:projsord2}, $\eta$ is closed and satisfies
\[ \eta F = 0 \quad \text{and}\quad \eta F^{(1)}\le \Omega^{1}(F).\]
Furthermore, by Theorem~\ref{thm:defozero}, for any $v_{0}\in (\mathbb{R}^{4})^{*}$ and $X,Y,Z\in \Gamma T\Sigma$, 
\begin{align*}
\eta(X)d_{YZ}\sigma &= v_{0}(\eta(X)d_{YZ}\sigma)\sigma + v_{0}(\eta(X)d_{Y}\sigma)d_{Z}\sigma \\
&+ v_{0}(\eta(X)d_{Z}\sigma)d_{Y}\sigma + v_{0}(\eta(X)\sigma)d_{YZ}\sigma,
\end{align*}
where $\sigma\in\Gamma f$ such that $v_{0}(\sigma)=1$. Now suppose that $Y$ is an asymptotic direction of $F$ and $Z=Y$. Then $d_{YZ}\sigma\in \Gamma F^{(1)}$ and thus $\eta(X)d_{YZ}\sigma\in \Gamma F$. Hence, $v_{0}(\eta(X)d_{Y}\sigma)=0$. Therefore, $\eta F^{(1)} =0$. We will now use that $\eta$ is closed to show that $\eta=0$:
suppose that $X,Y,Z\in\Gamma T\Sigma$. Then, as $\eta$ is closed, we have that for any $\sigma\in\Gamma F$
\[ d\eta(X,Y)d_{Z}\sigma =0.\]
Since $\eta F^{(1)}=0$, this is equivalent to 
\[ \eta(X)d_{YZ}\sigma - \eta(Y) d_{XZ}\sigma=0.\]
Assume now that $X$ and $Y$ are distinct asymptotic directions of $F$. Then setting $Z=Y$ implies that $\eta(Y)d_{XY}\sigma = 0$, since $d_{YY}\sigma \in\Gamma F^{(1)}$. Similarly, setting $Z=X$ implies that  $\eta(X)d_{YX}\sigma = 0$, which in turn implies that $\eta(X)d_{XY}\sigma = 0$. Therefore, since $\{ \sigma , d_{X}\sigma , d_{Y}\sigma , d_{XY}\sigma\}$ is a basis for $\underline{\mathbb{R}}^{4}$, $\eta =0$. Thus we have proved the following classically known theorem:

\begin{theorem}
Surfaces in projective $3$-space are rigid to third order. 
\end{theorem}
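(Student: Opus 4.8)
The plan is to show that a third order infinitesimal deformation $\eta$ must vanish identically. By the definition of third order deformability, $\eta$ is in particular a second order infinitesimal deformation, so Theorem~\ref{thm:projsord2} already grants that $\eta$ is closed and satisfies $\eta F = 0$ and $\eta F^{(1)}\le \Omega^{1}(F)$. The strategy is to first sharpen the second condition to $\eta F^{(1)} = 0$ using the third order contact condition from Theorem~\ref{thm:defozero}, and then exploit closedness of $\eta$ together with the asymptotic structure to conclude that $\eta$ annihilates the remaining basis direction $d_{XY}\sigma$, whence $\eta = 0$.

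First I would write out the third order condition supplied by Theorem~\ref{thm:defozero}: for all $v_{0}\in(\mathbb{R}^4)^{*}$, all $X,Y,Z\in\Gamma T\Sigma$ and $\sigma\in\Gamma F$ with $v_0(\sigma)=1$,
\[
\eta(X)d_{YZ}\sigma = v_0(\eta(X)d_{YZ}\sigma)\sigma + v_0(\eta(X)d_Y\sigma)d_Z\sigma + v_0(\eta(X)d_Z\sigma)d_Y\sigma + v_0(\eta(X)\sigma)d_{YZ}\sigma.
\]
The key observation is that if $Y$ is an asymptotic direction and $Z=Y$, then $d_{YY}\sigma\in\Gamma F^{(1)}$, so the left-hand side lies in $\Gamma F$ by the already-established condition $\eta F^{(1)}\le\Omega^1(F)$; comparing the $d_Y\sigma$-components forces $v_0(\eta(X)d_Y\sigma)=0$. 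Ranging over asymptotic directions and all $v_0$ upgrades the condition to $\eta F^{(1)} = 0$.

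The second half uses closedness. Since $\eta$ is closed and $\eta|_{F^{(1)}}=0$, applying the Maurer-Cartan/exterior-derivative identity to $d_Z\sigma$ reduces $d\eta(X,Y)d_Z\sigma = 0$ to the clean relation $\eta(X)d_{YZ}\sigma - \eta(Y)d_{XZ}\sigma = 0$. Taking $X,Y$ to be the two distinct asymptotic directions $T_1,T_2$ and specialising $Z=Y$ (so that $d_{YY}\sigma\in\Gamma F^{(1)}$ is killed by $\eta$) gives $\eta(Y)d_{XY}\sigma=0$; the symmetric choice $Z=X$ gives $\eta(X)d_{XY}\sigma=0$ after using symmetry of second derivatives. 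Since $\{\sigma,d_X\sigma,d_Y\sigma,d_{XY}\sigma\}$ is a basis for $\underline{\mathbb{R}}^4$ (this is exactly the standing assumption $F^{(2)}=\underline{\mathbb{R}}^4$), and $\eta$ has already been shown to annihilate the first three basis vectors, we conclude $\eta=0$.

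I expect the main obstacle to be the careful bookkeeping in the closedness step: one must verify that the lower-order terms arising from $d\eta(X,Y)d_Z\sigma$ genuinely drop out because $\eta$ kills $F^{(1)}$, and that the two specialisations $Z=X$ and $Z=Y$ together pin down $\eta$ on all of $d_{XY}\sigma$ rather than only a one-dimensional piece. Since $\eta=0$ is manifestly of the trivial form $d\xi$ with $\xi=0$, there is no non-trivial deformation, and rigidity to third order follows immediately.
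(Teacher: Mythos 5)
Your proposal is correct and follows essentially the same route as the paper's own proof: you first upgrade $\eta F^{(1)}\le \Omega^{1}(F)$ to $\eta F^{(1)}=0$ by specialising the third-order condition of Theorem~\ref{thm:defozero} to an asymptotic direction with $Z=Y$, then use closedness of $\eta$ together with the choices $Z=Y$ and $Z=X$ for distinct asymptotic $X,Y$ to obtain $\eta(Y)d_{XY}\sigma=\eta(X)d_{XY}\sigma=0$, and conclude $\eta=0$ from the frame $\{\sigma, d_{X}\sigma, d_{Y}\sigma, d_{XY}\sigma\}$ furnished by the standing assumption $F^{(2)}=\underline{\mathbb{R}}^{4}$. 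The bookkeeping points you flag (lower-order terms dropping out because $\eta|_{F^{(1)}}=0$, and commuting $d_{YX}$ with $d_{XY}$ modulo $d_{[X,Y]}\sigma\in\Gamma F^{(1)}$) are exactly the ones the paper handles, so there is no gap.
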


\section{Hypersurfaces in the conformal $n$-sphere}
\label{sec:confdefo}
In this section we will apply the results of Section~\ref{sec:defo} to examine deformations of hypersurfaces in conformal geometry. For a modern treatment of conformal geometry see for example \cite{AG1996,AG1997,BCwip,BC2010i,H2003,M1995,JMN2016}.  

Let $n\in\mathbb{N}$. Then we may view $\mathbb{S}^{n}$ as the projective light cone $\mathbb{P}(\mathcal{L})$ of $\mathbb{R}^{n+1,1}$, which is acted upon transitively by the orthogonal group $\textrm{O}(n+1,1)$. Suppose that $F:\Sigma\to \mathbb{P}(\mathcal{L})$ is an immersion, where $\Sigma$ is an $(n-1)$-dimensional manifold. We will view $F$ as a null line subbundle of $\underline{\mathbb{R}}^{n+1,1}$. Note that as $F$ is an immersion, the derived bundle $F^{(1)}$ of $F$ is a codimension 1 subbundle of $F^{\perp}$. Let $V$ be a sphere congruence enveloped by $F$, i.e., $V$ is a bundle of $(n,1)$-planes such that $F^{(1)}\le V$. Then let $\tilde{F}$ be a null-line subbundle of $V$ complementary to $F$, i.e., $F\oplus \tilde{F}$ is a $(1,1)$-subbundle of $V$. Let $U:= (F\oplus \tilde{F})^{\perp}\cap V$. Then $F^{(1)} = F\oplus U$ and $F^{\perp}=F\oplus U \oplus V^{\perp}$. We now have a splitting
\[\underline{\mathbb{R}}^{n+1,1} = F \oplus \tilde{F} \oplus U\oplus V^{\perp},\]
and thus a splitting of $\wedge^{2}\underline{\mathbb{R}}^{n+1,1}$:
\[ \wedge^{2}\underline{\mathbb{R}}^{n+1,1} = F\wedge U\oplus F\wedge V^{\perp} \oplus U \wedge U \oplus U\wedge V^{\perp}  \oplus F\wedge \tilde{F} \oplus \tilde{F}\wedge U\oplus \tilde{F}\wedge V^{\perp}.\]
We shall make the assumption that $F$ is an umbilic-free hypersurface. This implies that the derived bundle $F^{(2)}$ of $F^{(1)}$ satisfies $F^{(2)}= \underline{\mathbb{R}}^{n+1,1}$. 

\subsection{Second order deformations}
By Theorem~\ref{thm:defozero}, $\eta\in \Omega^{1}(\underline{\mathfrak{o}(n+1,1)})$ is a second order infinitesimal deformation of $F$ if and only if $\eta$ satisfies the Maurer-Cartan equation, and for all $v_{0}\in (\mathbb{R}^{n+1,1})^{*}$ and $X,Y\in \Gamma T\Sigma$
\begin{eqnarray}
\label{eqn:confsord}
\eta \sigma = v_{0}(\eta \sigma)\sigma\quad
\text{and}\quad \eta(X)d_{Y}\sigma = v_{0}(\eta(X)\sigma)d_{Y}\sigma + v_{0}(\eta(X)d_{Y}\sigma)\sigma,
\end{eqnarray}
where $\sigma\in\Gamma F$ such that $v_{0}(\sigma)=1$. From the skew-symmetry of $\eta$ it follows that $v_{0}(\eta \sigma)=0$. Thus, (\ref{eqn:confsord}) holds if and only if 
\[ \eta F =0 \quad \text{and}\quad \eta F^{(1)}\le \Omega^{1}(F), \]
or equivalently
\[ \eta F= 0 \quad \text{and} \quad \eta \,U \le \Omega^{1}(F).\]
This clearly holds if and only if 
\[ \eta \in\Omega^{1}(F\wedge U \oplus F\wedge V^{\perp}) =\Omega^{1}(F\wedge F^{\perp}).\]
Now $F\wedge F^{\perp}$ is a bundle of abelian subalgebras of $\underline{\mathfrak{o}(n+1,1)}$. Therefore, $[\eta\wedge\eta]=0$ and the condition that $\eta$ satisfies the Maurer-Cartan equation reduces to $\eta$ being closed. 

We shall now investigate the uniqueness and triviality of second order deformations. According to Lemma~\ref{lem:defouniq} and Lemma~\ref{lem:defotriv}, this is determined by maps $h:\Sigma\to \textrm{O}(n+1,1)$ such that $F$ is a second order deformation of itself via $h$, i.e., $h$ such that $hF =F$ and $\theta_{h}:= h^{-1}dh\in\Omega^{1}(F\wedge F^{\perp})$. Thus, for any section $\sigma\in\Gamma F$, $h\sigma =\lambda \sigma$, for some smooth function $\lambda$. Differentiating this along $X\in\Gamma T\Sigma$ gives
\[ (d_{X}h)\sigma + h d_{X}\sigma = (d_{X}\lambda)\sigma + \lambda d_{X}\sigma.\]
But since $\theta_{h}F=0$, we have that 
\[ h d_{X}\sigma = (d_{X}\lambda)\sigma + \lambda d_{X}\sigma.\]
The orthogonality of $h$ then gives that $\lambda=\pm 1$. Hence, $h|_{F^{(1)}}= \pm id|_{F^{(1)}}$ and for any $\nu\in\Gamma F^{(1)}$, $h\nu=\pm\nu$. Differentiating this condition along $Y\in\Gamma T\Sigma$ gives that
\[ (d_{Y}h)\nu + hd_{Y}\nu = \pm d_{Y}\nu.\]
Then since $\theta_{h}F^{\perp}\le F$, we have that $h|_{F^{(2)}}\equiv \pm id|_{F^{(2)}}\, mod\, F$. Now using our assumption that $F^{(2)}=\underline{\mathbb{R}}^{n+1,1}$, we may write 
\[ h= \pm id + \xi,\]
where $\xi|_{F^{(1)}}=0$ and $im \xi\le F$. From the orthogonality of $h$ one may deduce that $\xi$ is skew-symmetric. Combined with $\xi|_{F^{(1)}}=0$ and $im \xi\le F$, this can only hold if $\xi =0$. We therefore have the following lemmata:

\begin{lemma}
\label{lem:confuniq}
Suppose that $\hat{F}$ is a second order deformation of $F$ via $g$. Then $\hat{F}$ is a second order deformation of $F$ via $\tilde{g}$ if and only if $\tilde{g} = \pm g$. 
\end{lemma}

\begin{lemma}
$\eta$ is a trivial second order infinitesimal deformation of $F$ if and only if $\eta=0$. 
\end{lemma}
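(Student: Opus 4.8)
The plan is to obtain the result directly from the self-deformation computation carried out just above, using Lemma~\ref{lem:defotriv} to convert triviality into a statement about second order deformations of $F$ with itself. By Lemma~\ref{lem:defouniq} and Lemma~\ref{lem:defotriv}, a second order $\mathrm{O}(n+1,1)$-deformation $g$ of $F$ is trivial if and only if $g = Ah$ for some constant $A \in \mathrm{O}(n+1,1)$ and some second order deformation $h:\Sigma \to \mathrm{O}(n+1,1)$ of $F$ with itself. Passing to Maurer-Cartan forms, the constant left factor $A$ contributes nothing, so that $\eta = g^{-1}dg = h^{-1}dh =: \theta_h$. Hence $\eta$ is a trivial second order infinitesimal deformation of $F$ precisely when $\eta = \theta_h$ for some such self-deformation $h$.

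Next I would invoke the analysis immediately preceding the lemma (equivalently, Lemma~\ref{lem:confuniq}), which shows that any $h$ with $hF = F$ and $\theta_h \in \Omega^1(F \wedge F^\perp)$ must be of the form $h = \pm id + \xi$ with $\xi|_{F^{(1)}} = 0$ and $im\,\xi \le F$, and that skew-symmetry of $\xi$ forces $\xi = 0$. Thus the only second order self-deformations of $F$ are the locally constant maps $h = \pm id$, for which $\theta_h = h^{-1}dh = 0$. Combined with the previous paragraph this gives $\eta = 0$, which is the forward implication. For the converse, if $\eta = 0$ then any $g$ with $g^{-1}dg = \eta$ is locally constant, so $\hat{F} = gF = AF$ is congruent to $F$ and the deformation is trivial; hence $\eta = 0$ is indeed a trivial infinitesimal deformation.

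The genuinely substantive content --- that self-deformations of $F$ collapse to $\pm id$ --- has already been established above using the orthogonality of $h$ together with the constraints $\xi|_{F^{(1)}} = 0$ and $im\,\xi \le F$. Consequently the only delicate point that remains is bookkeeping: matching Definition~\ref{defn:defo} and Lemma~\ref{lem:defotriv} so that ``trivial infinitesimal deformation'' is correctly identified with ``$\eta = \theta_h$ for a self-deformation $h$'', and checking that the constant factor $A$ drops out of the Maurer-Cartan form. I anticipate no real obstacle beyond this careful tracking of definitions.
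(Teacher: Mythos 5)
Your proof is correct and follows essentially the same route as the paper: the paper derives this lemma directly from the preceding analysis showing that any self-deformation $h$ of $F$ satisfies $h=\pm id+\xi$ with $\xi$ skew-symmetric, $\xi|_{F^{(1)}}=0$ and $im\,\xi\le F$, forcing $\xi=0$ and hence $\theta_h=0$. Your explicit bookkeeping via Lemma~\ref{lem:defouniq} and Lemma~\ref{lem:defotriv}, together with the observation that the constant factor drops out of the Maurer--Cartan form and the easy converse for $\eta=0$, is exactly the implicit content of the paper's ``we therefore have the following lemmata.''
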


We have thus arrived at the main theorem of this subsection:

\begin{theorem}
\label{thm:confsord}
$F:\Sigma\to \mathbb{P}(\mathcal{L})$ is deformable of order two if and only if there exists a closed non-zero one-form $\eta$ taking values in $F\wedge F^{\perp}$. 
\end{theorem}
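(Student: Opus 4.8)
The plan is to read the theorem off from the two ingredients already assembled in this subsection: the characterisation of second order infinitesimal deformations and the description of the trivial ones. The bridge between the statement, which concerns deformability in the sense of Definition~\ref{defn:defo} (and hence the existence of a group-valued map $g\colon\Sigma\to \textrm{O}(n+1,1)$), and the one-form $\eta$ is provided by the Maurer--Cartan form $\theta_{g}=g^{-1}dg$.

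First I would establish the forward implication. Suppose $F$ is deformable of order two, so there is a non-trivial second order deformation $g$ with deform $\hat{F}=gF$. Put $\eta:=\theta_{g}$; as a Maurer--Cartan form it automatically satisfies the Maurer--Cartan equation. Any $g'$ with $(g')^{-1}dg'=\eta$ differs from $g$ by a constant left translation, and since $Ag$ is a deformation exactly when $g$ is (the remark recorded after Definition~\ref{defn:defo}), every such $g'$ is again a second order deformation; hence $\eta$ is a second order infinitesimal deformation of $F$. By the characterisation obtained above this forces $\eta F=0$ and $\eta F^{(1)}\le\Omega^{1}(F)$, i.e.\ $\eta\in\Omega^{1}(F\wedge F^{\perp})$ with $\eta$ closed. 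Finally, the deformation being non-trivial, Lemma~\ref{lem:defotriv} together with the triviality lemma of this subsection rules out $\eta=0$. Thus $\eta$ is the desired closed non-zero one-form valued in $F\wedge F^{\perp}$.

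For the converse I would start from a closed non-zero $\eta\in\Omega^{1}(F\wedge F^{\perp})$. Because $F\wedge F^{\perp}$ is a bundle of abelian subalgebras we have $[\eta\wedge\eta]=0$, so closedness already yields the Maurer--Cartan equation; by the characterisation $\eta$ is then a second order infinitesimal deformation. Integrating $\eta$ to a map $g\colon\Sigma\to \textrm{O}(n+1,1)$ with $g^{-1}dg=\eta$ produces, by the very definition of an infinitesimal deformation, a second order deformation, and the deform $\hat{F}=gF$ is non-trivial precisely because $\eta\neq0$ (again by the triviality lemma). Hence $F$ is deformable of order two.

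The main obstacle is the passage between the infinitesimal picture and the genuine deformation. In the converse direction one must integrate the Maurer--Cartan form: given $\eta$ satisfying the Maurer--Cartan equation there exists $g$ with $g^{-1}dg=\eta$ only when $\Sigma$ is simply connected, so in general the argument is carried out locally or on the universal cover. In the forward direction the delicate point, already flagged above, is that a single deformation $g$ certifies $\eta$ as an infinitesimal deformation only after one checks that every primitive of $\eta$ is again a deformation; this is exactly the left-translation invariance recorded after Definition~\ref{defn:defo}.
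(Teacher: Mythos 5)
Your proposal is correct and takes essentially the same route as the paper, which obtains the theorem by assembling exactly the ingredients you use: the characterisation of second order infinitesimal deformations as closed one-forms with values in $F\wedge F^{\perp}$ (with the Maurer--Cartan equation reducing to closedness because $F\wedge F^{\perp}$ is a bundle of abelian subalgebras) together with the triviality lemma stating that $\eta$ is trivial if and only if $\eta=0$. Your explicit caveat about integrating $\eta$ only locally or on the universal cover, and your check that all primitives of $\eta$ differ by constant left translations, are careful points the paper leaves implicit but do not change the argument.
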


In \cite{B2006} it is shown that an $\eta$ satisfying the conditions of Theorem~\ref{thm:confsord} does not exist for $n>3$. In the case of $n=3$, one sees that the conditions of Theorem~\ref{thm:confsord} coincide with the gauge-theoretic definition of \textit{isothermic surfaces} (see for example~\cite{BC2010i,BS2012}). Classically, these are the surfaces that, away from umbilic points, locally admit  conformal curvature line coordinates. We have thus recovered the following result:

\begin{corollary}[\cite{C1923}]
Isothermic surfaces are the only second order deformable surfaces in the conformal 3-sphere. 
\end{corollary}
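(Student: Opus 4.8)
The plan is to specialise Theorem~\ref{thm:confsord} to the surface case $n=3$ and to recognise the one-form condition appearing there as the gauge-theoretic characterisation of an isothermic surface. When $n=3$ the manifold $\Sigma$ is two-dimensional, $\mathbb{P}(\mathcal{L})$ is the conformal $3$-sphere, and Theorem~\ref{thm:confsord} asserts that $F$ is deformable of order two if and only if there exists a closed non-zero $\eta\in\Omega^1(F\wedge F^\perp)$. It therefore suffices to prove that such an $\eta$ exists precisely when $F$ is isothermic.

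To do this I would invoke the gauge-theoretic definition of isothermic surfaces given in \cite{BC2010i,BS2012}, namely that $F$ is isothermic if and only if it admits a non-zero closed one-form with values in $F\wedge F^\perp$. Under this formulation the equivalence is immediate, since the two conditions coincide. The substantive content is then the (known) fact that this gauge-theoretic notion agrees with the classical one, in which the curvature lines of $F$ admit conformal parameters; I would cite this agreement from the references rather than reprove it. It is also worth recording, using \cite{B2006}, that for $n>3$ no closed non-zero one-form valued in $F\wedge F^\perp$ exists, so deformability can occur only in the surface case and the statement is sharp.

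The main obstacle is not logical but definitional: one must pin down exactly which gauge-theoretic description of isothermicity is being used and confirm that it is literally the existence of a closed non-zero one-form in $\Omega^1(F\wedge F^\perp)$, as opposed to an equivalent but superficially different formulation (for example one phrased through a holomorphic quadratic differential or a retraction form, or one incorporating the decomposition $F\wedge F^\perp = F\wedge U\oplus F\wedge V^\perp$ and the choice of enveloped sphere congruence $V$). Once this identification is made the corollary follows directly from Theorem~\ref{thm:confsord} with no further computation.
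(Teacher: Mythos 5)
Your proposal is correct and takes essentially the same route as the paper, which likewise deduces the corollary immediately from Theorem~\ref{thm:confsord} together with the gauge-theoretic definition of isothermic surfaces cited from \cite{BC2010i,BS2012}, noting via \cite{B2006} that no such $\eta$ exists for $n>3$. The definitional identification you flag---that isothermicity is literally the existence of a non-zero closed one-form in $\Omega^{1}(F\wedge F^{\perp})$---is exactly the point the paper delegates to those references.
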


\begin{remark}
In~\cite{C1920i,MN2009}, the deformability of submanifolds in the conformal $n$-sphere with codimension greater that one was considered. In this case it is shown that, although isothermic surfaces are deformable to second order, a generic second order deformable surface is not isothermic. 
\end{remark}

In \cite{S2008} it was proved that more can be said about where $\eta$ takes values:

\begin{proposition}
\label{prop:confsord}
If $\eta\in\Omega^{1}(F\wedge F^{\perp})$ is closed then $\eta\in \Omega^{1}(F\wedge F^{(1)})$.
\end{proposition}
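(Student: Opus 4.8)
The plan is to work with a fixed nowhere-zero local section $\sigma\in\Gamma F$ and the splitting $\underline{\mathbb{R}}^{n+1,1}=F\oplus\tilde{F}\oplus U\oplus V^{\perp}$. Since $F\wedge F^{(1)}=F\wedge U$, I would write $\eta=\sigma\wedge(u+\rho)$ with $u\in\Omega^{1}(U)$ and $\rho\in\Omega^{1}(V^{\perp})$; this is possible because $F^{\perp}=F\oplus U\oplus V^{\perp}$ and the $F$-part of the second factor is killed by $\sigma\wedge(\cdot)$. The claim is then exactly that $\rho=0$. Recall that under the identification $(a\wedge b)w=\langle a,w\rangle b-\langle b,w\rangle a$ such an $\eta$ acts by $\eta(Y)w=\langle\sigma,w\rangle(u(Y)+\rho(Y))-\langle u(Y)+\rho(Y),w\rangle\sigma$, and that $\eta F=0$ while $\eta F^{(1)}\le\Omega^{1}(F)$.

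First I would exploit closedness by expanding $0=(d\eta(X,Y))d_{Z}\sigma$ with the Leibniz rule, exactly as in the projective-space computation, to obtain
\[
\eta(Y)d_{XZ}\sigma-\eta(X)d_{YZ}\sigma = d_{X}(\eta(Y)d_{Z}\sigma)-d_{Y}(\eta(X)d_{Z}\sigma)-\eta([X,Y])d_{Z}\sigma.
\]
The key observation is that $\eta(Y)d_{Z}\sigma\in\Gamma F$: writing $\beta(Z):=d_{Z}\sigma\bmod F\in\Gamma U$, one computes $\eta(Y)d_{Z}\sigma=-\langle u(Y),\beta(Z)\rangle\sigma$. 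Hence the first two terms on the right lie in $\Gamma F^{(1)}=\Gamma(F\oplus U)$, and the bracket term lies in $\Gamma F$, so the entire right-hand side has trivial $V^{\perp}$-component. Projecting the identity onto the $V^{\perp}$-summand therefore annihilates the right-hand side.

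Next I would isolate the $V^{\perp}$-components on the left. Since $u(Y)\in U$ and $\rho(Y)\in V^{\perp}$, the action formula gives $[\eta(Y)d_{XZ}\sigma]_{V^{\perp}}=\langle\sigma,d_{XZ}\sigma\rangle\,\rho(Y)$. Setting $h(X,Z):=\langle\sigma,d_{XZ}\sigma\rangle=-\langle d_{X}\sigma,d_{Z}\sigma\rangle=-\langle\beta(X),\beta(Z)\rangle$, the projected identity becomes
\[
h(X,Z)\rho(Y)=h(Y,Z)\rho(X)\qquad\text{for all }X,Y,Z\in\Gamma T\Sigma.
\]
The crucial structural input here is that $h$ is non-degenerate: the immersion hypothesis makes $X\mapsto\beta(X)$ an isomorphism $T\Sigma\to U$, and $U$ is positive-definite, so $h$ is (minus) a Riemannian metric.

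Finally, since $V^{\perp}$ has rank one, I would write $\rho=\omega\,\xi$ for a local section $\xi$ of $V^{\perp}$ and a scalar $1$-form $\omega$, so that the identity reads $h(\cdot,Z)\wedge\omega=0$ for every $Z$. As $Z$ varies, non-degeneracy of $h$ forces $h(\cdot,Z)$ to range over all of $T^{*}\Sigma$, whence $\mu\wedge\omega=0$ for every covector $\mu$; because $\dim\Sigma=n-1\ge 2$ this gives $\omega=0$, i.e.\ $\rho=0$, which is the assertion. I expect the main obstacle to be the bookkeeping in the first two steps---verifying that the right-hand side and the bracket term genuinely carry no $V^{\perp}$-component, and correctly extracting the $V^{\perp}$-part of $\eta(Y)d_{XZ}\sigma$---together with establishing the non-degeneracy of $h$, which is precisely what rules out a nonzero normal part $\rho$.
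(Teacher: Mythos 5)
Your proof is correct, and there is in fact no internal argument in the paper to compare it against: Proposition~\ref{prop:confsord} is stated as imported from \cite{S2008} without proof. Your argument is a sound, self-contained replacement, and it is very much in the style of the paper's own third-order rigidity computations (differentiate a contact identity, use skew-symmetry/orthogonality, project onto a summand of the splitting). The individual steps all check out: since $\sigma$ is null and $U\oplus V^{\perp}\le F^{\perp}$, one indeed has $\eta(Y)d_{Z}\sigma=-\langle u(Y),\beta(Z)\rangle\sigma\in\Gamma F$ (the $\rho$-term drops because $\beta(Z)\in U\le V\perp V^{\perp}$), so on the right-hand side of your closedness identity the two derivative terms lie in $\Gamma F^{(1)}=\Gamma(F\oplus U)$ and the bracket term in $\Gamma F$, leaving no $V^{\perp}$-component; on the left the $V^{\perp}$-part of $\eta(Y)d_{XZ}\sigma$ is $\langle\sigma,d_{XZ}\sigma\rangle\rho(Y)=h(X,Z)\rho(Y)$ with $h(X,Z)=-\langle\beta(X),\beta(Z)\rangle$, and since each term of the resulting identity $h(X,Z)\rho(Y)=h(Y,Z)\rho(X)$ is $C^{\infty}$-linear in $X,Y,Z$ it holds pointwise. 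Non-degeneracy of $h$ is also justified: $V$ has signature $(n,1)$ and $F\oplus\tilde{F}$ signature $(1,1)$, so $U$ is positive definite of rank $n-1$, and the immersion hypothesis makes $\beta:T\Sigma\to U$ an isomorphism, so $h$ is minus a Riemannian metric (indeed, the induced conformal class) and $Z\mapsto h(\cdot,Z)$ is surjective onto $T^{*}_{p}\Sigma$; then $h(\cdot,Z)\wedge\omega=0$ for all $Z$ forces $\omega=0$ when $\dim\Sigma\ge 2$. The one point worth making explicit is the caveat you flag in passing: the hypothesis $\dim\Sigma=n-1\ge 2$ is genuinely needed, not cosmetic, since for $n=2$ every one-form on the curve $\Sigma$ is closed and $\eta=\sigma\wedge\rho$ with $0\neq\rho\in\Omega^{1}(V^{\perp})$ is a counterexample to the statement; this is consistent with the paper's setting, whose interest is $n=3$ (for $n>3$ no nonzero such $\eta$ exists at all, by the cited result of \cite{B2006}).
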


\subsection{Third order deformations} 
We will now show that rigidity occurs at third order in the conformal $3$-sphere. Suppose that $\eta$ is a third order infinitesimal deformation of $F$. Then by Proposition~\ref{prop:confsord}, $\eta \in \Omega^{1}(F\wedge F^{(1)})$. Furthermore, by Theorem~\ref{thm:defo}, for all $X,Y,Z\in\Gamma T\Sigma$ and $\sigma\in \Gamma F$, 
\[ (d_{Y}\eta(Z))\sigma =\mu\, \sigma\quad \text{and}\quad (d_{X}d_{Y}\eta(Z))\sigma \in \Gamma F, \]
for some smooth function $\mu$. Using the Leibniz rule, one then deduces that 
\[ (d_{Y}\eta(Z))d_{X}\sigma = \mu \, d_{X}\sigma \, mod\, F .\] 
The skew-symmetry of $(d_{Y}\eta(Z))$ implies that $\mu=0$. Hence, $(d_{Y}\eta(Z))\sigma=0$. By the Leibniz rule this implies that $\eta(Z)d_{Y}\sigma =0$ and thus $\eta F^{(1)}=0$. Therefore, $\eta=0$ and it follows that:

\begin{theorem}
A surface in the conformal $3$-sphere is rigid to third order.
\end{theorem}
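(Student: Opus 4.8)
The plan is to assume that $\eta\in\Omega^{1}(\underline{\mathfrak{o}(4,1)})$ is a third order infinitesimal deformation of $F$ and to show that $\eta$ must vanish. Since a third order deformation is in particular a second order one, Proposition~\ref{prop:confsord} places $\eta\in\Omega^{1}(F\wedge F^{(1)})$; in particular the second order analysis already gives $\eta F=0$. The genuinely new information is the third order part of the criterion: applying Theorem~\ref{thm:defo} with $k=3$ yields, for all $X,Y,Z\in\Gamma T\Sigma$, the two contact conditions $(d_{Y}\eta(Z))F\le F$ and $(d_{X}d_{Y}\eta(Z))F\le F$. Because $F$ is a line bundle, the first of these reads $(d_{Y}\eta(Z))\sigma=\xi\,\sigma$ for a smooth function $\xi$, where $\sigma\in\Gamma F$, and the whole argument comes down to forcing $\xi=0$ and then propagating the vanishing to all of $F^{(1)}$.

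The first step is to understand how $d_{Y}\eta(Z)$ acts on $d_{X}\sigma$. I would differentiate $(d_{Y}\eta(Z))\sigma=\xi\,\sigma$ along $X$ and apply the Leibniz rule to isolate $(d_{Y}\eta(Z))d_{X}\sigma$. The term $(d_{X}d_{Y}\eta(Z))\sigma$ that appears lies in $\Gamma F$ by the second contact condition, and the term $(d_{X}\xi)\sigma$ lies in $\Gamma F$ as well, so that $(d_{Y}\eta(Z))d_{X}\sigma\equiv \xi\,d_{X}\sigma \bmod F$. Thus, modulo $F$, the endomorphism $A:=d_{Y}\eta(Z)$ acts as multiplication by $\xi$ on both $\sigma$ and $d_{X}\sigma$, that is, on all of $F^{(1)}$.

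The crux, and the step I expect to be the main obstacle, is to extract $\xi=0$; this is where the Riemannian signature of the problem enters, in contrast to the projective case where closedness of $\eta$ did the analogous work. Since $\eta$ takes values in the fixed linear subspace $\mathfrak{o}(4,1)\subset\mathfrak{gl}(5)$, so does its derivative, and hence $A$ is skew with respect to the ambient metric. Skew-symmetry gives $\langle A\,d_{X}\sigma,\,d_{X}\sigma\rangle=0$; substituting $A\,d_{X}\sigma=\xi\,d_{X}\sigma+c\,\sigma$ and using $\langle\sigma,d_{X}\sigma\rangle=0$ (which follows from $\langle\sigma,\sigma\rangle=0$) reduces this to $\xi\,\langle d_{X}\sigma,d_{X}\sigma\rangle=0$. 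Here I would invoke that the induced conformal structure is definite: decomposing $d_{X}\sigma$ along $F\oplus U$, the quantity $\langle d_{X}\sigma,d_{X}\sigma\rangle$ equals $\langle w,w\rangle$ for the $U$-component $w$, and the definiteness of the spacelike subbundle $U$ forces this to be nonzero for $X\neq 0$. Hence $\xi=0$, so $(d_{Y}\eta(Z))\sigma=0$.

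Finally I would propagate the vanishing. From $(d_{Y}\eta(Z))\sigma=0$ together with $\eta(Z)\sigma=0$, applying the Leibniz rule to $d_{Y}(\eta(Z)\sigma)=0$ forces $\eta(Z)d_{Y}\sigma=0$, and combined with $\eta(Z)\sigma=0$ this gives $\eta F^{(1)}=0$. Writing $\eta(Z)=\sigma\wedge u$ with $u\in\Gamma U$, the identity $\eta(Z)d_{Y}\sigma=-\langle u,d_{Y}\sigma\rangle\sigma=0$ shows that $u$ is orthogonal to $U$; definiteness of $U$ then yields $u=0$ and therefore $\eta=0$. This establishes that $F$ admits no non-trivial third order infinitesimal deformation, which is precisely the claimed rigidity.
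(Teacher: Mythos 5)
Your proof is correct and follows essentially the same route as the paper's: Proposition~\ref{prop:confsord} to place $\eta\in\Omega^{1}(F\wedge F^{(1)})$, the third order conditions from Theorem~\ref{thm:defo} to get $(d_{Y}\eta(Z))d_{X}\sigma\equiv\xi\,d_{X}\sigma\bmod F$, skew-symmetry to force $\xi=0$, and then Leibniz to conclude $\eta F^{(1)}=0$ and hence $\eta=0$. The only difference is that you spell out the details the paper leaves implicit, namely that $\xi\langle d_{X}\sigma,d_{X}\sigma\rangle=0$ together with the positive definiteness of $U$ kills $\xi$, and that the final step $\eta=0$ again rests on the definiteness of $U$; these are exactly the intended justifications.
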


\section{Legendre maps}
\label{sec:legdefo}
In this section we study the deformability of contact elements in Lie sphere geometry and projective geometry. This problem has been studied in~\cite{B1929,F2000i,F2000ii,F1916,MN2006}. 

Let $s,t\in\mathbb{N}$ such that $(s,t)=(3,3)$ or $(s,t)=(4,2)$. Consider $\mathbb{R}^{s,t}$ and let $\mathcal{L}^{5}$ denote the $5$-dimensional lightcone of this space. Let $\mathcal{Z}$ denote the Grassmannian of null two dimensional subspaces of $\mathbb{R}^{s,t}$. $\mathcal{Z}$ is acted upon transitively by $G= \textrm{O}(s,t)$. Suppose that $\Sigma$ is a 2-dimensional manifold. We say that a smooth map $f:\Sigma\to \mathcal{Z}$ is a \textit{Legendre map} if $f^{(1)}\le f^{\perp}$ and at every $p\in \Sigma$, if $X\in T_{p}\Sigma$ such that $d_{X}\sigma \in f(p)$ for all sections $\sigma\in\Gamma f$, then $X=0$. We may view a Legendre map as rank $2$ null subbundle on the trivial bundle $\underline{\mathbb{R}}^{s,t}:=\Sigma\times \mathbb{R}^{s,t}$.

It was shown in~\cite{BH2002} that a Legendre map naturally equips $T\Sigma$ with a conformal structure. In the case that $(s,t)=(4,2)$ this conformal structure at each point either vanishes or has signature $(1,1)$, however in the case of $(s,t)=(3,3)$, any signature is possible. From this point onwards we shall make the assumption that the signature of this conformal structure is $(1,1)$ at each point. In this case we may denote by $T_{1}$ and $T_{2}$ the null subbundles of this conformal structure. Our Legendre map then admits two special rank $1$ subbundles $s_{1}$ and $s_{2}$, called the \textit{curvature sphere congruences of $f$}, such that
\[ d_{X}\sigma_{1}, d_{Y}\sigma_{2}\in \Gamma f,\]
for all $\sigma_{1}\in \Gamma s_{1}$, $\sigma_{2}\in \Gamma s_{2}$, $X\in \Gamma T_{1}$ and $Y\in \Gamma T_{2}$. We may then form a splitting of the trivial bundle $\underline{\mathbb{R}}^{s,t}$ as $\underline{\mathbb{R}}^{s,t}=S_{1}\oplus_{\perp} S_{2}$, where 
\begin{equation}
\label{eqn:liecyc}
S_{1} := \langle \sigma_{1}, d_{Y}\sigma_{1}, d_{Y}d_{Y}\sigma_{1}\rangle\quad \text{and}\quad S_{2}:=\langle \sigma_{2},d_{X}\sigma_{2}, d_{X}d_{X}\sigma_{2}\rangle.
\end{equation}
This is called the \textit{Lie cyclide splitting}. For $i\in \{1,2\}$, let $f_{i}$ denote the set of sections of $f$ and derivatives of $f$ along $T_{i}$. One then has that $f_{i}$ is a rank $3$ subbundle of $f^{\perp}$ and furthermore 
\[ f^{\perp}/f = f_{1}/f\oplus_{\perp}f_{2}/f,\]
with each $f_{i}/f$ inheriting a non-degenerate metric from that of $\underline{\mathbb{R}}^{s,t}$. 

We identify $f$ with the map $F:\Sigma\to Z$, defined by $F = \wedge^{2}f$, where $Z$ is the subset of $\mathbb{P}(\wedge^{2}\mathbb{R}^{s,t})$ defined by
\[ Z:= \{ [v\wedge w]: v,w\in\mathcal{L}, \, v\not\parallel w \,\, \text{and}\,\,  (v,w)=0 \}.\]
$Z$ is acted upon smoothly and transitively by $\textrm{O}(s,t)$ via 
\[ A [v\wedge w] = [Av\wedge Aw].\]

Let $\tilde{f}:\Sigma\to \mathcal{Z}$ be complementary to $f$, i.e., $f\oplus \tilde{f}$ is a rank $4$ bundle with signature $(2,2)$. Let $U=(f\oplus \tilde{f})^{\perp}$. Then we have a splitting of $\underline{\mathbb{R}}^{s,t}$:
\[ \underline{\mathbb{R}}^{s,t}= (f\oplus \tilde{f})^{\perp} \oplus_{\perp} U.\]
This induces a splitting of $\wedge^{2}\underline{\mathbb{R}}^{s,t}$: 
\[ \wedge^{2}\underline{\mathbb{R}}^{s,t} = \wedge^{2} f \oplus f\wedge U \oplus f\wedge \tilde{f} \oplus \wedge^{2} U \oplus \tilde{f}\wedge U \oplus \wedge^{2} \tilde{f}.\]

\subsection{Second order deformations}
\label{subsec:cfsord}
By Theorem~\ref{thm:defo}, $\eta\in \Omega^{1}(\underline{\mathfrak{o}(s,t)})$ is a second order infinitesimal deformation if and only if $\eta$ satisfies the Maurer-Cartan equation and 
\begin{eqnarray}
\label{eqn:sord}
 \eta F\le \Omega^{1}(F) \quad \text{and}\quad (d_{X}\eta(Y))F\le F,
\end{eqnarray}
for all $X,Y\in\Gamma T\Sigma$. Now $\eta F\le \Omega^{1}(F)$ if and only if for linearly independent $\sigma,\xi\in\Gamma f$, 
\[ (\eta \sigma)\wedge \xi + \sigma \wedge (\eta \xi) = \eta (\sigma\wedge \xi) \in \Omega^{1}(F).\]
Since $\sigma$ and $\xi$ are linearly independent this is equivalent to 
\[ \eta f\le \Omega^{1}(f).\]
Similarly, one can show that $(d_{X}\eta(Y))F\le F$ is equivalent to $(d_{X}\eta(Y))f\le f$. By the Leibniz rule, this holds if and only if for any section $\sigma\in \Gamma f$,
\begin{eqnarray}
\label{eqn:sord2}
d_{X}(\eta(Y)\sigma) - \eta(Y)d_{X}\sigma \in \Gamma f.
\end{eqnarray}
Now, if we assume that $X$ is a curvature direction, i.e., $X\in \Gamma T_{i}$ for some $i\in\{1,2\}$, then $\eta f\le \Omega^{1}(f)$ implies that $d_{X}(\eta(Y)\sigma)\in \Gamma f_{i}$. Furthermore, $\eta(Y)d_{X}\sigma$ is orthogonal to $d_{X}\sigma$. Therefore, as the metric on $\underline{\mathbb{R}}^{s,t}$ restricts to a non-degenerate metric on $f_{i}/f$, we can deduce that 
\[ d_{X}(\eta(Y)\sigma), \, \eta(Y)d_{X}\sigma \in \Gamma f.\]
Now, $d_{X}(\eta(Y)\sigma)\in\Gamma f$ if and only if $\eta(Y)\sigma\in \Gamma s_{i}$. Since this holds for all $i\in\{1,2\}$, one has that $\eta f \equiv 0$. Also, $\eta(Y)d_{X}\sigma \in \Gamma f$ implies that $\eta f^{(1)} \le \Omega^{1}(f)$. Thus, $\eta \, U \le \Omega^{1}(f)$. Finally, 
\[ \eta f \equiv 0 \quad \text{and}\quad \eta \, U \le \Omega^{1}(f)\]
if and only if 
\[ \eta \in \Omega^{1}(\wedge^{2} f\oplus f\wedge U) = \Omega^{1}(f\wedge f^{\perp}).\]
One can easily check that the converse is true, i.e., given $\eta\in\Omega^{1}(f\wedge f^{\perp})$ satisfying the Maurer-Cartan equation, (\ref{eqn:sord}) holds.

The following proposition was proved in~\cite{P2018} in the case that $(s,t)=(4,2)$. Using analogous arguments one can show that it holds in the case that $(s,t)=(3,3)$ as well. 

\begin{proposition}
\label{prop:closedform}
Suppose that $\eta\in \Omega^{1}(f\wedge f^{\perp})$. Then $\eta$ satisfies the Maurer-Cartan equation if and only if it is closed. Furthermore, $\eta(T_{i})\le f\wedge f_{i}$ and $[\eta\wedge \eta]=0$. 
\end{proposition}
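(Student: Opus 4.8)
The plan is to reduce everything to a single structural observation about the subalgebra $f\wedge f^{\perp}$. First I would record the relevant bracket relations. Since $f$ is null and $f\le f^{\perp}$, for $a,c\in f$ and $b,d\in f^{\perp}$ the cross pairings $(a,c)$, $(a,d)$, $(b,c)$ all vanish, so under the identification $\wedge^{2}\underline{\mathbb{R}}^{s,t}\cong\underline{\mathfrak{o}(s,t)}$ one gets $[a\wedge b,c\wedge d]=(b,d)\,a\wedge c\in\wedge^{2}f$. Hence $f\wedge f^{\perp}$ is a two-step nilpotent subalgebra with $[f\wedge f^{\perp},f\wedge f^{\perp}]\le\wedge^{2}f=F$, and moreover $\wedge^{2}f$ annihilates $f$ under the $\mathfrak{o}(s,t)$-action, again because $f$ is null. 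In particular $[\eta\wedge\eta]\in\Omega^{2}(\wedge^{2}f)$ for every $\eta\in\Omega^{1}(f\wedge f^{\perp})$, so the Maurer-Cartan equation and closedness differ exactly by the term $[\eta\wedge\eta]$, and proving $[\eta\wedge\eta]=0$ will give the stated equivalence at once.

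The key is then to notice that both hypotheses feed into the same computation. If $\eta$ is closed then $d\eta=0$; if $\eta$ satisfies Maurer-Cartan then $d\eta=-\tfrac12[\eta\wedge\eta]\in\Omega^{2}(\wedge^{2}f)$; in either case $(d\eta(X,Y))\sigma=0$ for all $X,Y\in\Gamma T\Sigma$ and $\sigma\in\Gamma f$, since $\wedge^{2}f$ kills $f$. This is what lets me prove the refinement without circularity. Starting from $\eta f=0$, which is part of $\eta\in\Omega^{1}(f\wedge f^{\perp})$, I differentiate $\eta(X)\sigma_{1}=0$ along $Y$, for $\sigma_{1}\in\Gamma s_{1}$, obtaining $\eta(X)d_{Y}\sigma_{1}=-((d_{Y}\eta)(X))\sigma_{1}$ up to terms annihilated by $\eta f=0$. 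Using $(d\eta(Y,X))\sigma_{1}=0$ to symmetrise, this becomes $\eta(X)d_{Y}\sigma_{1}=\eta(Y)d_{X}\sigma_{1}$; taking $X\in\Gamma T_{1}$ so that $d_{X}\sigma_{1}\in\Gamma f$ by the curvature sphere property, the right-hand side vanishes, giving $\eta(X)d_{Y}\sigma_{1}=0$. Since $d_{Y}\sigma_{1}$ spans $f_{2}/f$ and the induced metric there is non-degenerate, this together with $\eta f=0$ says precisely that $\eta(T_{1})\le f\wedge f_{1}$; the symmetric argument gives $\eta(T_{2})\le f\wedge f_{2}$.

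With the refinement in hand, $[\eta\wedge\eta]=0$ is immediate. As $\Sigma$ is two-dimensional with $T\Sigma=T_{1}\oplus T_{2}$, a two-form is determined by its value on $X\in\Gamma T_{1}$, $Y\in\Gamma T_{2}$, and there $[\eta\wedge\eta](X,Y)$ is a multiple of $[\eta(X),\eta(Y)]$ with $\eta(X)\in f\wedge f_{1}$ and $\eta(Y)\in f\wedge f_{2}$. By the bracket relation above this is a multiple of the pairing between $f_{1}$ and $f_{2}$, which vanishes because $f_{1}/f\perp f_{2}/f$ and $f$ is null; hence $[\eta\wedge\eta]=0$. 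The equivalence of the Maurer-Cartan equation and closedness then follows, and the two ``furthermore'' assertions have been proved along the way. The main obstacle is the refinement step: one must set up the differentiation of $\eta f=0$ carefully (tracking the connection terms, which are harmless precisely because $\eta f=0$) and, crucially, observe that it is the weak consequence $(d\eta)|_{f}=0$ of \emph{both} hypotheses that drives it, so that a single argument establishes both directions of the equivalence simultaneously rather than assuming closedness from the outset.
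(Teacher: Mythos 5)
Your proof is correct. Be aware, though, that the paper itself contains no proof of this proposition: it defers to \cite{P2017} for the case $(s,t)=(4,2)$ and simply asserts that ``analogous arguments'' cover $(s,t)=(3,3)$, so the real comparison is with the cited source, and there your core computation is essentially the same one: differentiate $\eta(X)\sigma_{1}=0$ along $Y$, use $(d\eta)\,f=0$ to exchange $(d_{Y}\eta(X))\sigma_{1}$ with $(d_{X}\eta(Y))\sigma_{1}$ (the $\eta([X,Y])\sigma_{1}$ term dying because $\eta f=0$), and invoke $d_{X}\sigma_{1}\in\Gamma f$ for $X\in\Gamma T_{1}$ to get $\eta(X)d_{Y}\sigma_{1}=0$; since $d_{Y}\sigma_{1}$ generates $f_{2}$ modulo $f$ for $Y\in\Gamma T_{2}$, non-degeneracy of the induced metric on $f_{2}/f$ together with $f\wedge f^{\perp}=f\wedge f_{1}+f\wedge f_{2}$ pins $\eta(T_{1})$ into $f\wedge f_{1}$, and your bracket identity $[a\wedge b,c\wedge d]=(b,d)\,a\wedge c$ for $a,c\in\Gamma f$, $b,d\in\Gamma f^{\perp}$ gives $[f\wedge f_{1},f\wedge f_{2}]=0$, hence $[\eta\wedge\eta]=0$ upon evaluation on a frame $X\in\Gamma T_{1}$, $Y\in\Gamma T_{2}$ of the surface. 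Two features of your write-up add genuine value relative to the paper's citation. First, the argument is signature-agnostic --- you use only non-degeneracy of the metrics on $f_{i}/f$, never definiteness --- so the $(3,3)$ and $(4,2)$ cases are handled simultaneously, which is precisely the step the paper leaves to the reader. Second, and more substantively, your observation that the refinement needs only the weak input $(d\eta)\sigma=0$ for $\sigma\in\Gamma f$, valid under \emph{either} hypothesis because $[f\wedge f^{\perp},f\wedge f^{\perp}]\le\wedge^{2}f$ annihilates $f$, delivers both implications of the equivalence in one pass; in particular it closes the direction ``Maurer--Cartan $\Rightarrow$ closed'' without circularity, the one spot where a naive proof (refinement assuming closedness only) would be incomplete. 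One caution concerning the statement rather than your proof: the ``furthermore'' clauses fail for a bare $\eta\in\Omega^{1}(f\wedge f^{\perp})$ --- a generic such $\eta$ has $\eta(T_{1})\not\le f\wedge f_{1}$ and non-vanishing $[\eta\wedge\eta]$ --- so they must be read under the closedness/Maurer--Cartan hypothesis; your proof adopts exactly that reading, which is also how the proposition is applied in Section~\ref{subsec:cfsord}.
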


Thus, we have arrived at the following proposition:

\begin{proposition}
\label{prop:cfsord}
$\eta\in \Omega^{1}(\underline{\mathfrak{o}(s,t)})$ is a second order infinitesimal deformation of $f$ if and only if $\eta$ is closed and takes values in $f\wedge f^{\perp}$. 
\end{proposition}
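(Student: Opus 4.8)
The plan is to leverage the analysis already carried out in Subsection~\ref{subsec:cfsord} together with Proposition~\ref{prop:closedform}, so that almost all of the work reduces to combining two previously established equivalences. First I would recall that, by Theorem~\ref{thm:defo}, $\eta$ is a second order infinitesimal deformation of $f$ if and only if $\eta$ satisfies the Maurer--Cartan equation and condition~(\ref{eqn:sord}) holds. The detailed argument immediately preceding this proposition shows that, \emph{granting} the Maurer--Cartan equation, the pair of conditions in~(\ref{eqn:sord}) is equivalent to the single algebraic condition $\eta\in\Omega^{1}(f\wedge f^{\perp})$. Thus the substance of the proposition is to show that, \emph{within the class} $\Omega^{1}(f\wedge f^{\perp})$, satisfying the Maurer--Cartan equation is the same as being closed.

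Second, this last equivalence is exactly the content of Proposition~\ref{prop:closedform}: for $\eta\in\Omega^{1}(f\wedge f^{\perp})$, the Maurer--Cartan equation holds if and only if $\eta$ is closed, and moreover in that case $[\eta\wedge\eta]=0$. So the forward direction proceeds as follows. If $\eta$ is a second order infinitesimal deformation, then $\eta$ satisfies Maurer--Cartan and~(\ref{eqn:sord}) holds; by the computation in the text this forces $\eta\in\Omega^{1}(f\wedge f^{\perp})$; and then Proposition~\ref{prop:closedform} upgrades the Maurer--Cartan condition to closedness. Hence $\eta$ is closed and takes values in $f\wedge f^{\perp}$, as required.

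For the converse I would run the same chain in reverse. Suppose $\eta$ is closed and $\eta\in\Omega^{1}(f\wedge f^{\perp})$. Since $f\wedge f^{\perp}$ is the relevant bundle of subalgebras, Proposition~\ref{prop:closedform} gives $[\eta\wedge\eta]=0$, and combined with $d\eta=0$ this yields the Maurer--Cartan equation
\[
d\eta + \tfrac{1}{2}[\eta\wedge\eta] = 0.
\]
It then remains to verify that~(\ref{eqn:sord}) holds for such an $\eta$; but this is precisely the ``converse'' direction noted in the text, namely that any $\eta\in\Omega^{1}(f\wedge f^{\perp})$ satisfying Maurer--Cartan fulfils both $\eta F\le\Omega^{1}(F)$ and $(d_{X}\eta(Y))F\le F$. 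Invoking Theorem~\ref{thm:defo} once more then identifies $\eta$ as a second order infinitesimal deformation.

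The only genuinely nontrivial input is Proposition~\ref{prop:closedform}, which carries the burden of showing that the bracket term $[\eta\wedge\eta]$ vanishes on $f\wedge f^{\perp}$ and that closedness is therefore equivalent to Maurer--Cartan. Since that proposition is cited and proved separately (via the $(s,t)=(4,2)$ argument of~\cite{P2017} and its analogue for $(s,t)=(3,3)$), the present statement is essentially a bookkeeping assembly of the preceding equivalences, and I expect no further obstacle. The one point requiring a moment's care is ensuring that the algebraic reduction of~(\ref{eqn:sord}) to $\eta\in\Omega^{1}(f\wedge f^{\perp})$ does not secretly presuppose closedness rather than merely the Maurer--Cartan equation; checking that the reduction uses only the pointwise linear-algebraic structure of $\eta$ (and not its derivative beyond what appears in $(d_{X}\eta(Y))F\le F$) closes that gap.
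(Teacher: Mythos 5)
Your proposal is correct and takes essentially the same route as the paper: there, Proposition~\ref{prop:cfsord} is likewise obtained by combining Theorem~\ref{thm:defo}, the reduction of~(\ref{eqn:sord}) to $\eta\in\Omega^{1}(f\wedge f^{\perp})$ carried out in the text immediately beforehand, and Proposition~\ref{prop:closedform} to trade the Maurer--Cartan equation for closedness. The point you flag at the end is indeed unproblematic, since the paper's reduction of~(\ref{eqn:sord}) uses only the skew-symmetry of $\eta$ and the non-degeneracy of the induced metric on $f_{i}/f$, not the Maurer--Cartan equation or closedness.
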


We now wish to determine the uniqueness and triviality of such deformations. Following Lemma~\ref{lem:defouniq} and Lemma~\ref{lem:defotriv}, we investigate maps $h:\Sigma\to \textrm{O}(s,t)$ such that $F$ is a second order deformation of itself via $h$. By Proposition~\ref{prop:cfsord}, such a $h$ is characterised by
\begin{equation}
\label{eqn:legtrivdef}
h F=F \quad \text{and} \quad \theta_{h}:=h^{-1}dh\in\Omega^{1}(f\wedge f^{\perp}).
\end{equation}
Furthermore, $hF=F$ if and only if $hf=f$. Let $\sigma_{i}\in\Gamma s_{i}$ be a lift of one of the curvature spheres of $f$. Then, since $hf=f$ we have that 
\[ h\sigma_{i} = \nu,\]
for some $\nu\in\Gamma f$. Differentiating this condition with respect to the curvature direction $X\in \Gamma T_{i}$ yields 
\[ (d_{X}h) \sigma_{i} + h d_{X}\sigma_{i} = d_{X}\nu.\]
Since $\theta_{h}\in\Omega^{1}(f\wedge f^{\perp})$, we have that $(d_{X}h) \sigma_{i} = 0$ and thus 
\[ h d_{X}\sigma_{i} = d_{X}\nu.\]
Since $d_{X}\sigma_{i}\in\Gamma f$ and $hf=f$, we must have that $d_{X}\nu\in \Gamma f$. Thus, $\nu \in\Gamma s_{i}$. Therefore, for some smooth function $\lambda$ we have that $h\sigma_{i} = \lambda \sigma_{i}$. Differentiating this condition gives for any $Z\in\Gamma T\Sigma$, 
\begin{eqnarray}
\label{eqn:sorduniq}
 (d_{Z}h)\sigma_{i} + h d_{Z}\sigma_{i} = (d_{Z}\lambda) \sigma_{i} + \lambda d_{Z}\sigma_{i}.
\end{eqnarray}
Then the orthogonality of $h$ and that $\theta_{h}f\equiv 0$ implies that $\lambda = \pm 1$. Therefore, $h|_{s_{i}}=\pm id|_{s_{i}}$. We then have two cases to consider either $h|_{f}=\pm id|_{f}$ or $h|_{s_{1}}=\pm id|_{s_{1}}$ and $h|_{s_{2}}=\mp id|_{s_{2}}$. 

\begin{lemma}
Suppose that $h|_{s_{1}} =\pm id|_{s_{1}}$ and $h|_{s_{2}}=\mp id|_{s_{2}}$. Then $S_{1}$ and $S_{2}$ are constant. 
\end{lemma}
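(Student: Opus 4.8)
The plan is to prove the statement by showing that, in this mixed-sign case, the Maurer--Cartan form $\theta_{h}=h^{-1}dh$ vanishes identically; then $h$ is locally constant and its $\pm 1$-eigenbundles, which will turn out to be precisely $S_{1}$ and $S_{2}$, are constant. After replacing $h$ by $-h$ if necessary (this changes neither the condition~(\ref{eqn:legtrivdef}) nor the bundles $S_{1},S_{2}$, since $-h$ and $h$ agree on $\wedge^{2}\underline{\mathbb{R}}^{s,t}$ and have the same Maurer--Cartan form), I normalise so that $h\sigma_{1}=\sigma_{1}$ and $h\sigma_{2}=-\sigma_{2}$ for chosen lifts $\sigma_{i}\in\Gamma s_{i}$.

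First I would differentiate $h\sigma_{i}=\epsilon_{i}\sigma_{i}$, where $\epsilon_{1}=1$ and $\epsilon_{2}=-1$. Since $\theta_{h}f\equiv 0$, the term $(d_{Z}h)\sigma_{i}=h\,\theta_{h}(Z)\sigma_{i}$ vanishes, giving $hd_{Z}\sigma_{i}=\epsilon_{i}d_{Z}\sigma_{i}$ for every $Z\in\Gamma T\Sigma$. Applying this to the curvature direction $X\in\Gamma T_{1}$ and using $d_{X}\sigma_{1}\in\Gamma f$ together with $h|_{s_{1}}=\mathrm{id}$, $h|_{s_{2}}=-\mathrm{id}$ forces $d_{X}\sigma_{1}\in\Gamma s_{1}$; symmetrically $d_{Y}\sigma_{2}\in\Gamma s_{2}$ for $Y\in\Gamma T_{2}$. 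Next I introduce coordinates for $\theta_{h}$: by Proposition~\ref{prop:closedform} one has $\theta_{h}(X)\in f\wedge f_{1}$ and $\theta_{h}(Y)\in f\wedge f_{2}$, and writing $f=\langle\sigma_{1},\sigma_{2}\rangle$, $f_{1}=\langle\sigma_{1},\sigma_{2},d_{X}\sigma_{2}\rangle$, $f_{2}=\langle\sigma_{1},\sigma_{2},d_{Y}\sigma_{1}\rangle$ expresses each in the resulting basis of bivectors with coefficient functions. Differentiating $hd_{Y}\sigma_{1}=d_{Y}\sigma_{1}$ once more and using $\theta_{h}f^{\perp}\le f$ gives $hd_{Y}d_{Y}\sigma_{1}=d_{Y}d_{Y}\sigma_{1}-h\,\theta_{h}(Y)d_{Y}\sigma_{1}$, and the explicit action of $f\wedge f_{2}$ on $d_{Y}\sigma_{1}$, computed from the null relations $(\sigma_{i},d_{Y}\sigma_{1})=0$ and from $(d_{Y}\sigma_{1},d_{Y}\sigma_{1})=:\mu\neq 0$ (non-degeneracy of the metric on $f_{2}/f$), writes this leakage in terms of just two coefficients.

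I would then extract relations among the coefficients. Imposing that $h$ preserves the metric, by comparing $(hd_{Y}d_{Y}\sigma_{1},hd_{Y}d_{Y}\sigma_{1})$ with $(d_{Y}d_{Y}\sigma_{1},d_{Y}d_{Y}\sigma_{1})$, kills one of the two coefficients (and, by the symmetric computation with $d_{X}d_{X}\sigma_{2}$, its partner), while pairing $hd_{Y}d_{Y}\sigma_{1}$ against $hd_{X}d_{X}\sigma_{2}$ yields a single linear relation between the two surviving off-diagonal coefficients. The crucial remaining obstruction is that the $s_{2}$-component of the leakage lies along the null line $s_{2}$ and is orthogonal to $d_{Y}d_{Y}\sigma_{1}$, so it is invisible to the metric: orthogonality alone cannot detect it.

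The hardest step, which I expect to be the main obstacle, is therefore to invoke closedness of $\theta_{h}$. I would fix the moving frame $\{\sigma_{1},\sigma_{2},d_{Y}\sigma_{1},d_{X}\sigma_{2},d_{Y}d_{Y}\sigma_{1},d_{X}d_{X}\sigma_{2}\}$ together with its connection coefficients, expand $d\theta_{h}(X,Y)=d_{X}(\theta_{h}(Y))-d_{Y}(\theta_{h}(X))-\theta_{h}([X,Y])=0$, and read off the components along suitable basis bivectors (in particular those detecting $\sigma_{1}\wedge\sigma_{2}$ and $\sigma_{2}\wedge d_{Y}\sigma_{1}$), so that the remaining coefficients are forced to vanish. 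The delicate point here is organising the frame and verifying that the structure functions do not reintroduce the off-diagonal terms. This yields $\theta_{h}=0$, hence $dh=0$ and $h$ is constant. Since then $hd_{Z}\sigma_{1}=d_{Z}\sigma_{1}$ and $hd_{Y}d_{Y}\sigma_{1}=d_{Y}d_{Y}\sigma_{1}$, we obtain $S_{1}\subseteq\ker(h-\mathrm{id})$ and likewise $S_{2}\subseteq\ker(h+\mathrm{id})$; because $S_{1}\oplus S_{2}=\underline{\mathbb{R}}^{s,t}$ these inclusions are equalities, exhibiting $S_{1}$ and $S_{2}$ as eigenbundles of a constant endomorphism, and hence constant.
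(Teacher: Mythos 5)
The first third of your argument coincides with the paper's proof and already proves the lemma: after normalising $h\sigma_{1}=\sigma_{1}$, $h\sigma_{2}=-\sigma_{2}$ and differentiating, $\theta_{h}f\equiv 0$ gives $h\,d_{Z}\sigma_{i}=\epsilon_{i}\,d_{Z}\sigma_{i}$, and feeding in the curvature directions forces $d_{X}\sigma_{1}\in\Gamma s_{1}$ and $d_{Y}\sigma_{2}\in\Gamma s_{2}$ (this is the paper's computation killing $\beta_{1}$ and $\alpha_{2}$). That is the Dupin condition, and combined with the Lie cyclide splitting~(\ref{eqn:liecyc}) it yields directly that $S_{1}$ and $S_{2}$ are constant, which is where the paper stops. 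Everything after that point in your proposal is a detour towards a \emph{stronger} claim --- $\theta_{h}\equiv 0$, hence $h$ constant with eigenbundles $S_{1},S_{2}$ --- and that claim is false.

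The step you yourself flag as the main obstacle (using closedness of $\theta_{h}$ to kill the residual coefficients) is exactly where it breaks. Take $f$ a Dupin cyclide (which the mixed-sign case forces anyway), let $\rho$ be as in the paper's discussion following the lemma, and set $h=\rho\exp(\xi)$ with $\xi=\lambda\,\sigma_{1}\wedge\sigma_{2}\in\Gamma(\wedge^{2}f)$ for a non-constant function $\lambda$. Since $f$ is null, $\xi$ annihilates $f^{\perp}$ and $\xi^{2}=0$, so $\exp(\xi)=\mathrm{id}+\xi$; one checks $h\sigma_{1}=\sigma_{1}$, $h\sigma_{2}=-\sigma_{2}$, $hF=F$ and $\theta_{h}=d\xi\in\Omega^{1}(f\wedge f^{(1)})$, so $h$ satisfies~(\ref{eqn:legtrivdef}) and the hypotheses of the lemma, yet $\theta_{h}=d\xi$ is closed and nonzero. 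Closedness therefore cannot force $\theta_{h}=0$. Moreover your final identification fails concretely: in this example $h\,d_{Y}d_{Y}\sigma_{1}=d_{Y}d_{Y}\sigma_{1}+\lambda\,(d_{Y}\sigma_{1},d_{Y}\sigma_{1})\,\sigma_{2}$ with $(d_{Y}\sigma_{1},d_{Y}\sigma_{1})\neq 0$ by non-degeneracy of the metric on $f_{2}/f$, so $S_{1}\not\subseteq\ker(h-\mathrm{id})$ --- the $s_{2}$-leak you correctly observed to be metrically invisible is genuinely present and is not a coefficient waiting to be eliminated. This is consistent with the paper's subsequent uniqueness lemma, which states that second order self-deformations are determined only up to $\pm\exp(\xi)$, respectively $\pm\rho\exp(\xi)$ in the Dupin case: the gauge freedom $\exp(\xi)$, $\xi\in\Gamma(\wedge^{2}f)$, is precisely the non-constant part you were trying to exclude. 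The repair is simply to delete the second half of your argument and conclude from the Dupin condition via~(\ref{eqn:liecyc}).
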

\begin{proof}
Let $\sigma_{1}\in\Gamma s_{1}$ and $\sigma_{2}\in\Gamma s_{2}$ and let $X\in \Gamma T_{1}$ and $Y\in \Gamma T_{2}$. Then 
\[ d_{X}\sigma_{1} = \alpha_{1} \sigma_{1} + \beta_{1}\sigma_{2} \quad \text{and}\quad d_{Y}\sigma_{2} = \alpha_{2} \sigma_{1} + \beta_{2}\sigma_{2},\]
for smooth functions $\alpha_{1},\alpha_{2},\beta_{1},\beta_{2}$. Now 
\[ \pm(\alpha_{1} \sigma_{1} + \beta_{1}\sigma_{2} ) = \pm d_{X}\sigma_{1} = d_{X}(h\sigma_{1}) =
(d_{X}h)\sigma_{1} + h d_{X}\sigma_{1} = \pm \alpha_{1} \sigma_{1} \mp \beta_{1}\sigma_{2},\]
since $\theta_{h}f\equiv 0$. Thus $\beta_{1}=0$. Similarly, one can show that $\alpha_{2}=0$. Thus, $d_{X}\sigma_{1}\in \Gamma s_{1}$ and $d_{Y}\sigma_{2}\in \Gamma s_{2}$ and one deduces from~(\ref{eqn:liecyc}) that $S_{1}$ and $S_{2}$ are constant. 
\end{proof}

$S_{1}$ and $S_{2}$ can only be constant if $f$ is a Dupin cyclide. In that case we may define $\rho\in \textrm{O}(s,t)$ such that $\rho$ restricts to the identity on $S_{1}$ and minus the identity on $S_{2}$. One then has that $F$ is a second order deformation of itself via $\tilde{h}:= \rho h$ and $\tilde{h}|_{f}=\pm id|_{f}$. 

So let us now assume that $h|_{f} = \pm id|_{f}$. Then by~(\ref{eqn:sorduniq}), $h|_{f^{(1)}} = \pm id|_{f^{(1)}}$. By differentiating this condition again one finds that $h|_{f^{(2)}/f} =\pm id |_{f^{(2)}/f}$, where $f^{(2)}$ denotes the derived bundle of $f^{(1)}$. Noting that $f^{(2)} = \underline{\mathbb{R}}^{s,t}$, we may write 
\[ h = \pm(id + \xi),\]
where $\xi$ satisfies $\xi(\underline{\mathbb{R}}^{s,t}) \le f$ and $\xi f^{\perp}\equiv 0$. Since $\xi(\underline{\mathbb{R}}^{s,t}) \le f$, we have that $(\xi v,\xi w)=0$ for all $v,w\in \Gamma\underline{\mathbb{R}}^{s,t}$. The orthogonality of $h$ then implies that $\xi$ is skew-symmetric. Combining this with the fact that $\xi(\underline{\mathbb{R}}^{s,t}) \le f$ and $\xi f^{\perp}\equiv 0$ we have that $\xi\in \Gamma (\wedge^{2} f)$. Hence, $h=\pm\exp(\xi)$.

Conversely, it is straightforward to check that if $h = \pm \exp(\xi)$, for some $\xi \in\Gamma (\wedge^{2} f)$, then $h$ satisfies~(\ref{eqn:legtrivdef}). We have thus arrived at the following lemmata:

\begin{lemma}
Suppose that $\hat{f}$ is a second order deformation of $f$ via $g$. Then, in the case that $f$ is not a Dupin cyclide, $\hat{f}$ is a second order deformation of $f$ via $\tilde{g}$ if and only if $\tilde{g}=\pm g\exp(\xi)$, for some $\xi\in\Gamma (\wedge^{2} f)$. In the case that $f$ is a Dupin cyclide, $\hat{f}$ is a second order deformation of $f$ via $\tilde{g}$ if and only if either $\tilde{g}=\pm g\exp(\xi)$ or $\tilde{g}=\pm \rho g\exp(\xi)$, for some $\xi\in\Gamma (\wedge^{2} f)$. 
\end{lemma}

\begin{lemma}
\label{lem:legtriv}
$\eta$ is a trivial second order infinitesimal deformation of $f$ if and only if $\eta = d\xi$ for some $\xi\in \Gamma (\wedge^{2} f)$. 
\end{lemma}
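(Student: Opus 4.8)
The plan is to convert triviality of the infinitesimal deformation $\eta$ into a statement about self-deformations of $F$ using Lemma~\ref{lem:defotriv}, and then feed in the explicit description of self-deformations obtained in the preceding lemma.

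First I would fix any $g:\Sigma\to \textrm{O}(s,t)$ with $g^{-1}dg=\eta$. By Lemma~\ref{lem:defotriv}, $\eta$ is trivial exactly when $g=Ah$ for some constant $A\in \textrm{O}(s,t)$ and some second order self-deformation $h$ of $F$. As $A$ is constant, $\eta=g^{-1}dg=h^{-1}dh$, so triviality of $\eta$ is equivalent to $\eta$ being the Maurer--Cartan form of a self-deformation $h$. The preceding lemma tells us that every such $h$ is $h=\pm\exp(\xi)$, or $h=\pm\rho\exp(\xi)$ in the Dupin case, with $\xi\in\Gamma(\wedge^{2}f)$. The constant factors $\pm 1$ and $\rho$ contribute nothing to $h^{-1}dh$, so in all cases it suffices to compute $\exp(-\xi)\,d\exp(\xi)$, and in particular the Dupin/non-Dupin dichotomy is invisible at the infinitesimal level.

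The key step is the identity $\exp(-\xi)\,d\exp(\xi)=d\xi$ for $\xi\in\Gamma(\wedge^{2}f)$, which I expect to be the only real obstacle. I would view $\xi$ as an endomorphism of $\underline{\mathbb{R}}^{s,t}$ via $\wedge^{2}\underline{\mathbb{R}}^{s,t}\cong\underline{\mathfrak{o}(s,t)}$. Since $f$ is null, $f\le f^{\perp}$, so $\xi$ has image in $f$ and annihilates $f^{\perp}$; in particular $\xi^{2}=0$, whence $\exp(\pm\xi)=id\pm\xi$ and
\[\exp(-\xi)\,d\exp(\xi)=(id-\xi)\,d\xi=d\xi-\xi\,d\xi.\]
It remains to show $\xi\,d\xi=0$. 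Writing $\xi=\phi\,\sigma_{1}\wedge\sigma_{2}$ for a local frame $\sigma_{1},\sigma_{2}$ of $f$ and differentiating, the Legendre condition $f^{(1)}\le f^{\perp}$ forces $d_{Z}\xi$ to have image in $f^{\perp}$ for every $Z$; since $\xi$ annihilates $f^{\perp}$ this yields $\xi\,d\xi=0$ and hence $\eta=h^{-1}dh=d\xi$. This proves the forward implication.

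For the converse, given $\xi\in\Gamma(\wedge^{2}f)$ I would set $h:=\exp(\xi)=id+\xi$. The converse recorded just before this lemma shows $h$ is a second order self-deformation of $F$, and the computation above gives $h^{-1}dh=d\xi=\eta$; note also that $d\xi$ is closed and $f\wedge f^{\perp}$-valued, so by Proposition~\ref{prop:cfsord} it is indeed a second order infinitesimal deformation. Choosing $g$ with $g^{-1}dg=\eta$, the standard uniqueness of such primitives up to a left constant gives $g=Ah$, so $\hat{F}=gF=A\,hF=AF$ is congruent to $F$ and the deformation is trivial by Lemma~\ref{lem:defotriv}. Together the two directions give the claim.
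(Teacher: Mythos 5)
Your proposal is correct and follows essentially the same route as the paper, which derives this lemma directly from Lemma~\ref{lem:defotriv} together with the classification $h=\pm\exp(\xi)$ (or $\pm\rho\exp(\xi)$ in the Dupin case, with $\rho$ constant) of second order self-deformations established just before the statement. Your explicit verification that $\xi^{2}=0$ and $\xi\,d\xi=0$ (so that $\exp(-\xi)\,d\exp(\xi)=d\xi$), using that $f$ is null and $f^{(1)}\le f^{\perp}$, is exactly the ``straightforward check'' the paper leaves implicit, and your remark that the constants $\pm 1$ and $\rho$ drop out of the Maurer--Cartan form correctly handles both cases at once.
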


As shown in~\cite{P2018}, since $\sigma \mapsto \eta(X)d_{Y}\sigma$ defines an endomorphism $f\to f$, there is a quadratic differential
\[ q(X,Y) = \text{tr}(\sigma\mapsto \eta(X)d_{Y}\sigma)\]
associated to closed one-forms taking values in $f\wedge f^{\perp}$. It turns out that we may use $q$ to determine the triviality of $\eta$: 

\begin{lemma}
\label{lem:qtriv}
$q=0$ if and only if $\eta = d\xi$ for some $\xi\in \Gamma (\wedge^{2}f)$. 
\end{lemma}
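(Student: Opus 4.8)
The plan is to combine the statement with Lemma~\ref{lem:legtriv}, which already identifies trivial second order infinitesimal deformations with forms $d\xi$, $\xi\in\Gamma(\wedge^2 f)$; so it suffices to compute $q$ explicitly and show it vanishes exactly on such exact forms. I would work in a frame adapted to the curvature spheres: fix local sections $\sigma_1\in\Gamma s_1$, $\sigma_2\in\Gamma s_2$, nonvanishing vector fields $X\in\Gamma T_1$, $Y\in\Gamma T_2$, and the representatives $n_1:=d_X\sigma_2$, $n_2:=d_Y\sigma_1$ of the lines $f_1/f$ and $f_2/f$. Since $\eta$ is a closed element of $\Omega^1(f\wedge f^\perp)$, Proposition~\ref{prop:closedform} gives $\eta(X)\in\Gamma(f\wedge f_1)$ and $\eta(Y)\in\Gamma(f\wedge f_2)$, so I may write
\[
\eta(X)=a\,\sigma_1\wedge\sigma_2+b\,\sigma_1\wedge n_1+c\,\sigma_2\wedge n_1,\qquad
\eta(Y)=\tilde a\,\sigma_1\wedge\sigma_2+\tilde b\,\sigma_1\wedge n_2+\tilde c\,\sigma_2\wedge n_2,
\]
for smooth functions $a,b,c,\tilde a,\tilde b,\tilde c$.

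The first key step is to evaluate $q$ in this frame using the action $(v\wedge w)u=(w,u)v-(v,u)w$. Because $\eta f\equiv 0$ and $d_{T_i}\sigma_i\in\Gamma f$, the endomorphism $\sigma\mapsto\eta(X)d_Y\sigma$ annihilates one of the two curvature spheres, so its trace collapses to a single diagonal entry; and because $f_1/f\perp f_2/f$ one checks that the mixed values $q(X,Y)$ vanish identically. The surviving diagonal values are $q(X,X)=c\,(n_1,n_1)$ and $q(Y,Y)=\tilde b\,(n_2,n_2)$, with $(n_i,n_i)\neq 0$ by nondegeneracy of $f_i/f$. Hence $q=0$ if and only if $c=\tilde b=0$, equivalently $\eta(T_i)\le s_i\wedge f_i$ for $i=1,2$.

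With this reformulation the forward implication is immediate: writing $\xi=\phi\,\sigma_1\wedge\sigma_2$ and differentiating, the relations $d_X\sigma_1,d_Y\sigma_2\in\Gamma f$ force $d\xi(X)\in\langle\sigma_1\wedge\sigma_2,\sigma_1\wedge n_1\rangle$ and $d\xi(Y)\in\langle\sigma_1\wedge\sigma_2,\sigma_2\wedge n_2\rangle$, so the $c$- and $\tilde b$-components are automatically absent and $q=0$. For the converse I would first feed the closedness $d\eta=0$ into $d\eta(X,Y)=0$ to extract the single scalar compatibility $b+\tilde c=0$ (note this is exactly the relation satisfied by exact forms, where $b=\phi$ and $\tilde c=-\phi$). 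Setting $\phi:=b$ and $\xi:=\phi\,\sigma_1\wedge\sigma_2$, the differentiation above shows that $\mu:=\eta-d\xi$ then has vanishing $n_1$- and $n_2$-parts, so $\mu\in\Omega^1(\wedge^2 f)$ and is closed. Writing $\mu(Z)=m(Z)\,\sigma_1\wedge\sigma_2$ and projecting $d\mu(X,Y)=0$ onto the complement of $\wedge^2 f$ — where, by the curvature relations, only the terms $m(Y)\,\sigma_1\wedge n_1$ and $m(X)\,\sigma_2\wedge n_2$ survive — forces $m(X)=m(Y)=0$, whence $\mu=0$ and $\eta=d\xi$.

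The main obstacle is the closedness bookkeeping in the converse: unwinding $d\eta(X,Y)=0$ in the adapted frame brings in the connection coefficients of $\{\sigma_i,n_i\}$ (the derivatives $d_X n_1$, $d_Y n_1$, and so on), and it is precisely here that one must verify $b+\tilde c=0$ so that a single potential $\phi$ reproduces both transversal components of $\eta$. Once that compatibility is in hand, the remaining projection argument is a short calculation driven entirely by $d_{T_i}\sigma_i\in\Gamma f$ and the nonvanishing of $(n_i,n_i)$.
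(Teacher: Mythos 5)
Your proposal is correct and takes essentially the same route as the paper: it expands $\eta$ in a frame adapted to the curvature sphere lifts $\sigma_{1},\sigma_{2}$ (your coefficients $\tilde b, c, b, \tilde c$ are the paper's $\alpha,\delta,\gamma,\beta$), observes that $q$ vanishes exactly when the two ``diagonal'' coefficients do, and then uses closedness to extract the single compatibility $b+\tilde c=0$ and conclude exactness. Your additional $\mu:=\eta-d\xi$ argument simply makes explicit the step the paper compresses into ``moreover $\eta=d(\beta\,\sigma_{2}\wedge\sigma_{1})$'', namely that the residual $\wedge^{2}f$-component is also killed by closedness.
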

\begin{proof}
We may write an arbitrary closed one-form $\eta\in \Omega^{1}(f\wedge f^{\perp})$ as 
\[ \eta = \alpha\,\sigma_{1}\wedge d\sigma_{1} + \beta\,\sigma_{2}\wedge d\sigma_{1} + \gamma\, \sigma_{1}\wedge d\sigma_{2} + \delta\,\sigma_{2}\wedge d\sigma_{2} \, mod\, \Omega^{1}(\wedge^{2} f)\]
for $\sigma_{1}\in \Gamma s_{1}$, $\sigma_{2}\in \Gamma s_{2}$ and some smooth functions $\alpha, \beta, \gamma, \delta$. The quadratic differential of $\eta$ is then given by
\[ q = - \alpha (d\sigma_{1}, d\sigma_{1}) - \delta (d\sigma_{2},d\sigma_{2}).\]
Since $(d\sigma_{1}, d\sigma_{1}) \in \Gamma (T_{2}^{*})^{2}$ and $(d\sigma_{2}, d\sigma_{2}) \in \Gamma (T_{1}^{*})^{2}$, one has that $q=0$ if and only if $\alpha=\delta=0$. One can clearly see that if $\eta=d\xi$, for some $\xi:=\lambda \sigma_{1}\wedge \sigma_{2}$, then $\alpha=\delta=0$. On the other hand, if $\alpha=\delta=0$, then the closure of $\eta$ implies that $\beta = -\gamma$ and moreover $\eta = d(\beta \sigma_{2}\wedge \sigma_{1})$. Hence $\eta = d\xi$ for $\xi := \beta \sigma_{2}\wedge \sigma_{1}$. 
\end{proof}

We thus obtain the main theorem of this section:
\begin{theorem}
\label{thm:legapp}
$f:\Sigma\to \mathcal{Z}$ is deformable to second order if and only if there exists a closed one-form $\eta$ taking values in $f\wedge f^{\perp}$ such that $q\neq 0$. 
\end{theorem}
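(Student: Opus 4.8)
The plan is to assemble the theorem from the three characterisations already in hand, so that the only genuine work is tracking how the finite notion of deformability in Definition~\ref{defn:defo} translates into the condition $q\neq 0$. First I would recall that, by the machinery of Section~\ref{sec:defo}, $f$ is deformable to second order precisely when it admits a \emph{non-trivial} second order infinitesimal deformation $\eta$. Indeed, given a non-trivial $G$-deformation $g$ one sets $\eta=g^{-1}dg$, which satisfies the Maurer--Cartan equation; since $Ag$ is a deformation whenever $g$ is, every $g'$ with $(g')^{-1}dg'=\eta$ (necessarily $g'=Ag$ for constant $A$) is again a deformation, so $\eta$ is an infinitesimal deformation. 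Conversely any $\eta$ satisfying the Maurer--Cartan equation integrates locally to some $g$ with $g^{-1}dg=\eta$. By Proposition~\ref{prop:cfsord} these infinitesimal deformations are exactly the closed one-forms valued in $f\wedge f^{\perp}$.

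Next I would pin down triviality and feed in the quadratic differential. By Lemma~\ref{lem:defotriv} the triviality of $g=Ah$ is governed by self-deformations $h$ of $f$, and since $A$ is constant one has $g^{-1}dg=h^{-1}dh$; thus a closed $\eta\in\Omega^{1}(f\wedge f^{\perp})$ is trivial exactly when $\eta=h^{-1}dh$ for such an $h$. Lemma~\ref{lem:legtriv} identifies these as precisely the exact forms $d\xi$ with $\xi\in\Gamma(\wedge^{2}f)$; here I would note that $\wedge^{2}f$ is an abelian subalgebra, since all inner products among sections of the totally isotropic plane $f$ vanish, so that $h=\pm\exp(\xi)$ (or $\pm\rho\exp(\xi)$ in the Dupin case, with $\rho$ constant and hence $d\rho=0$) yields $h^{-1}dh=d\xi$. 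Invoking Lemma~\ref{lem:qtriv}, which equates $q=0$ with $\eta=d\xi$ for some $\xi\in\Gamma(\wedge^{2}f)$, a closed $\eta\in\Omega^{1}(f\wedge f^{\perp})$ is trivial if and only if $q=0$, equivalently non-trivial if and only if $q\neq0$.

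Stringing these equivalences together gives the result: $f$ is deformable to second order $\iff$ there is a non-trivial second order infinitesimal deformation $\iff$ there is a closed $\eta\in\Omega^{1}(f\wedge f^{\perp})$ not of the form $d\xi$ $\iff$ there is a closed $\eta\in\Omega^{1}(f\wedge f^{\perp})$ with $q\neq0$.

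The substantive content has all been front-loaded into Proposition~\ref{prop:cfsord} and Lemmas~\ref{lem:legtriv} and~\ref{lem:qtriv}, so the theorem itself is essentially an assembly, and the only step that is not a formal citation is the first equivalence, matching a non-trivial $G$-deformation with a non-trivial infinitesimal one. The care required there is purely book-keeping of triviality under $g\mapsto g^{-1}dg$: one checks that left translation by a constant $A\in O(s,t)$ leaves the Maurer--Cartan form fixed, and that the self-deformations (the $\pm\exp(\xi)$ and, for Dupin cyclides, the extra $\pm\rho\exp(\xi)$) contribute only exact forms, so the Dupin ambiguity does not enlarge the space of trivial $\eta$. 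With that verified, the genuinely new ingredient specific to this theorem is simply the replacement of the condition $\eta\neq d\xi$ by $q\neq0$ supplied by Lemma~\ref{lem:qtriv}.
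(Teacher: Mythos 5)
Your proposal is correct and takes the same route as the paper, which states Theorem~\ref{thm:legapp} as an immediate assembly of Proposition~\ref{prop:cfsord} (infinitesimal deformations are the closed $f\wedge f^{\perp}$-valued one-forms), Lemma~\ref{lem:legtriv} (triviality means $\eta=d\xi$ with $\xi\in\Gamma(\wedge^{2}f)$), and Lemma~\ref{lem:qtriv} (replacing $\eta\neq d\xi$ by $q\neq 0$) --- exactly your chain of equivalences. The additional book-keeping you supply (constancy of $A$ and of $\rho$ leaving the Maurer--Cartan form unchanged, so the Dupin ambiguity contributes only exact forms) is left implicit in the paper but is accurate.
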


Considering the case of $(s,t)=(4,2)$ we can see that the conditions of Theorem~\ref{thm:legapp} coincide with the gauge theoretic definition of \textit{$\Omega$-surfaces} and \textit{$\Omega_{0}$-surfaces} of~\cite{P2018}. Classically~\cite{D1911iii,D1911i,D1911ii}, $\Omega_{0}$-surfaces are defined as those surfaces possessing a curvature sphere congruence that is isothermic, whereas $\Omega$-surfaces are defined by the existence of a pair of enveloped isothermic sphere congruences that separate the curvature sphere congruences harmonically. We thus arrive at the following result:

\begin{corollary}[\cite{MN2006}]
$\Omega$- and $\Omega_{0}$-surfaces are the only second order deformable surfaces of Lie sphere geometry. 
\end{corollary}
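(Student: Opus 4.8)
The plan is to assemble the equivalence directly from the three preceding results, since the genuine analytic content has already been extracted in this section. By Definition~\ref{defn:defo}, $f$ is deformable to second order precisely when it admits a \emph{non-trivial} second order deformation $g:\Sigma\to \textrm{O}(s,t)$. I would first translate this into the infinitesimal setting: such a $g$ exists if and only if there exists a non-trivial second order infinitesimal deformation $\eta$. Indeed, given $g$ one sets $\eta = g^{-1}dg$, which automatically satisfies the Maurer--Cartan equation, and conversely any $\eta$ obeying Maurer--Cartan integrates to some $g$ with $g^{-1}dg=\eta$. The matching of triviality across this correspondence is exactly what Lemma~\ref{lem:defotriv}, together with Lemma~\ref{lem:defouniq}, provides, so that $g$ is a trivial deformation if and only if $\eta$ is a trivial infinitesimal deformation.

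With this reduction in hand, I would invoke Proposition~\ref{prop:cfsord} to identify the second order infinitesimal deformations of $f$ with the closed one-forms $\eta$ taking values in $f\wedge f^{\perp}$. It then remains only to characterise which of these are non-trivial. By Lemma~\ref{lem:legtriv}, $\eta$ is trivial exactly when $\eta = d\xi$ for some $\xi\in\Gamma(\wedge^{2} f)$, and by Lemma~\ref{lem:qtriv} this occurs if and only if the quadratic differential $q$ vanishes. Chaining these two equivalences, a closed $\eta\in\Omega^{1}(f\wedge f^{\perp})$ is non-trivial if and only if $q\neq 0$. Combining this with the reduction of the first paragraph yields the statement: $f$ is deformable to second order if and only if there is a closed one-form $\eta\in\Omega^{1}(f\wedge f^{\perp})$ with $q\neq 0$.

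The proof is thus a synthesis rather than a fresh computation, and I expect the only genuine subtlety to lie in the bookkeeping of the first paragraph, namely ensuring that the notion of a \emph{non-trivial} deformation $g$ corresponds cleanly to a \emph{non-trivial} infinitesimal deformation $\eta$, independently of which primitive $g$ one integrates $\eta$ to. This is precisely where Lemmas~\ref{lem:defouniq} and~\ref{lem:defotriv} do the work; once that correspondence is granted, the remainder is a direct citation of Proposition~\ref{prop:cfsord} and Lemmas~\ref{lem:legtriv} and~\ref{lem:qtriv}.
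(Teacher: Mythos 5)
Your synthesis is a faithful reconstruction of the paper's route to Theorem~\ref{thm:legapp}: the passage between deformations $g$ and infinitesimal deformations $\eta=g^{-1}dg$ via Lemmas~\ref{lem:defouniq} and~\ref{lem:defotriv}, the identification of second order infinitesimal deformations with closed $\eta\in\Omega^{1}(f\wedge f^{\perp})$ by Proposition~\ref{prop:cfsord}, and the detection of triviality by the quadratic differential through Lemmas~\ref{lem:legtriv} and~\ref{lem:qtriv} are exactly the ingredients the paper assembles. But what you have proved at the end --- that $f$ is deformable to second order if and only if there is a closed one-form $\eta\in\Omega^{1}(f\wedge f^{\perp})$ with $q\neq 0$ --- is Theorem~\ref{thm:legapp} itself, which the paper states immediately \emph{before} the corollary. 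The corollary you were asked to prove is a different statement: that the surfaces singled out by this criterion are precisely the $\Omega$- and $\Omega_{0}$-surfaces of Lie sphere geometry. Your argument nowhere mentions these surface classes.

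The missing step is therefore the identification of the criterion of Theorem~\ref{thm:legapp} with the classical classes $\Omega$ and $\Omega_{0}$, and this identification is the entire content of the paper's proof: it invokes the gauge-theoretic characterisation of~\cite{P2017}, by which a Legendre map is an $\Omega$- or $\Omega_{0}$-surface if and only if it admits a closed one-form $\eta\in\Omega^{1}(f\wedge f^{\perp})$ with $q\neq 0$ (the two subclasses being distinguished by the behaviour of $q$, i.e., by which of the coefficients $\alpha$, $\delta$ appearing in the proof of Lemma~\ref{lem:qtriv} survive). To complete your proof you must either cite this equivalence between the gauge-theoretic definition and the classical (Demoulin) definition, or reprove it; only then does Theorem~\ref{thm:legapp} yield the corollary. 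Once that identification is granted, the rest of your reduction is sound, modulo the standard caveat (shared silently by the paper) that integrating $\eta$ to a map $g$ with $g^{-1}dg=\eta$ requires $\Sigma$ simply connected or a restriction to local considerations.
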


\begin{remark}
In~\cite{BHPR2019,MN2006} it was shown how second order deformable maps in Lie sphere geometry yield deformable maps in conformal and Laguerre geometry. For more information about deformability in Laguerre geometry, see~\cite{MN1997,MN2016}. 
\end{remark}

\subsection{Third order deformations}
In this subsection we shall show that rigidity occurs at third order for Legendre maps. Suppose that $\eta$ is a third order infinitesimal deformation of $F$. Then by Theorem~\ref{thm:legapp}, $\eta \in \Omega^{1}(f\wedge f^{\perp})$ and $\eta$ is closed. Now by Theorem~\ref{thm:defo}, for $X, Y, Z\in \Gamma T\Sigma$, 
\[ (d_{X}d_{Y}\eta(Z))F \le F.\]
or, equivalently, 
\begin{eqnarray}
\label{eqn:cftord}
(d_{X}d_{Y}\eta(Z))f\le f.
\end{eqnarray}
Let $\sigma \in \Gamma f$ and assume that $X$ is a curvature direction of $f$, i.e, $X\in \Gamma T_{i}$ for $i\in\{1,2\}$. Then by the Leibniz rule, equation~(\ref{eqn:cftord}) implies that 
\begin{eqnarray}
\label{eqn:cftord2}
d_{X}((d_{Y}\eta(Z))\sigma) - (d_{Y}\eta(Z))d_{X}\sigma\in\Gamma f.
\end{eqnarray}
Now since $(d_{Y}\eta(Z))\sigma\in\Gamma f$, we have that $d_{X}((d_{Y}\eta(Z))\sigma) \in \Gamma f_{i}$. Furthermore, as $d_{Y}\eta(Z)$ is skew-symmetric, $(d_{Y}\eta(Z))d_{X}\sigma$ is orthogonal to $d_{X}\sigma$. Thus, equation~(\ref{eqn:cftord2}) holds if and only if 
\[ d_{X}((d_{Y}\eta(Z))\sigma) \in \Gamma f\quad \text{and} \quad (d_{Y}\eta(Z))d_{X}\sigma\in\Gamma f.\]
Now $d_{X}((d_{Y}\eta(Z))\sigma) \in \Gamma f$ implies that 
\[ (d_{Y}\eta(Z))\sigma\in \Gamma s_{i}.\]
Since $i$ was arbitrary, we then have that $(d_{Y}\eta(Z))\sigma=0$. By the Leibniz rule this implies that 
\[ d_{Y}(\eta(Z)\sigma) - \eta(Z)d_{Y}\sigma =0,\] 
and since $\eta(Z)f=0$, we have that 
\[ \eta(Z)d_{Y}\sigma =0.\] 
Hence, $\eta f^{\perp} \equiv 0$ and thus $\eta \in\Omega^{1}(\wedge^{2} f)$. One can then check that $\eta$ being closed implies that $\eta \equiv 0$. We have thus arrived at the following result:

\begin{theorem}
Legendre maps are rigid to third order.
\end{theorem}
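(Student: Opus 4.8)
The plan is to show that the second-order characterisation of Theorem~\ref{thm:legapp}, together with the extra third-order constraint coming from Theorem~\ref{thm:defo}, forces any third-order infinitesimal deformation to vanish identically; since a non-trivial deformation would in particular be non-zero, this yields rigidity. So I would begin by recording that a third-order deformation $\eta$ is a fortiori a second-order one, hence closed and valued in $f\wedge f^{\perp}$. From this membership I would extract the two facts used repeatedly: $\eta f = 0$ and $\eta f^{(1)}\le \Omega^{1}(f)$. The additional datum at third order, read off from Theorem~\ref{thm:defo}, is $(d_{X}d_{Y}\eta(Z))F\le F$, equivalently $(d_{X}d_{Y}\eta(Z))f\le f$ for all $X,Y,Z\in\Gamma T\Sigma$.

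The core of the argument is to convert this second-derivative condition into a pointwise statement about where $\eta$ sends $f^{\perp}$. First I would differentiate $\eta(Z)\sigma=0$ to get $(d_{Y}\eta(Z))\sigma=-\eta(Z)d_{Y}\sigma\in\Gamma f$. Then, expanding $(d_{X}d_{Y}\eta(Z))f\le f$ by the Leibniz rule and specialising $X$ to a curvature direction $X\in\Gamma T_{i}$, I would obtain $d_{X}\big((d_{Y}\eta(Z))\sigma\big)-(d_{Y}\eta(Z))d_{X}\sigma\in\Gamma f$. The point is now to split this into two separate membership conditions: since $(d_{Y}\eta(Z))\sigma\in\Gamma f$ its $X$-derivative lands in $\Gamma f_{i}$, while skew-symmetry of $d_{Y}\eta(Z)$ forces $(d_{Y}\eta(Z))d_{X}\sigma\perp d_{X}\sigma$; because the induced metric on $f_{i}/f$ is non-degenerate, both terms must separately lie in $\Gamma f$. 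The condition $d_{X}\big((d_{Y}\eta(Z))\sigma\big)\in\Gamma f$ then says precisely that $(d_{Y}\eta(Z))\sigma\in\Gamma s_{i}$, and running this for both $i$ and using $s_{1}\cap s_{2}=0$ gives $(d_{Y}\eta(Z))\sigma=0$. One more application of the Leibniz rule (with $\eta(Z)f=0$) turns this into $\eta(Z)d_{Y}\sigma=0$, so $\eta$ kills $f^{(1)}=f^{\perp}$; equivalently $\eta\in\Omega^{1}(\wedge^{2}f)$.

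The final step is to upgrade ``$\eta$ valued in $\wedge^{2}f$ and closed'' to ``$\eta=0$''. Since $\wedge^{2}f$ is a line bundle I would write $\eta=\mu\,\sigma_{1}\wedge\sigma_{2}$ for a scalar $1$-form $\mu$ and curvature sphere lifts $\sigma_{i}\in\Gamma s_{i}$, compute $d\eta$, and isolate its components transverse to $\wedge^{2}f$. These transverse components involve $d_{X}\sigma_{2}\bmod f$ for $X\in\Gamma T_{1}$ and $d_{Y}\sigma_{1}\bmod f$ for $Y\in\Gamma T_{2}$, which span the non-zero line bundles $f_{2}/f$ and $f_{1}/f$, so closure forces $\mu=0$. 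It is worth emphasising that this is exactly where rigidity differs from the second-order picture: a trivial deformation $d\xi$ with $\xi\in\Gamma(\wedge^{2}f)$ generically has non-zero transverse components and so leaves $\Omega^{1}(\wedge^{2}f)$, whereas a closed form that remains inside the line bundle $\wedge^{2}f$ has nowhere to live and must vanish.

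I expect the splitting in the second paragraph to be the main obstacle. The delicate part is justifying that the single containment $d_{X}\big((d_{Y}\eta(Z))\sigma\big)-(d_{Y}\eta(Z))d_{X}\sigma\in\Gamma f$ really decouples into two independent conditions; this rests on combining the skew-symmetry of $d_{Y}\eta(Z)$ with the non-degeneracy of the metric on $f_{i}/f$ and on the fact that the class of $d_{X}\sigma$ in $f_{i}/f$ is non-zero, and it must be arranged so that the conclusion $(d_{Y}\eta(Z))\sigma\in\Gamma s_{i}$ holds for a fixed section independent of the chosen curvature direction.
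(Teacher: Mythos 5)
Your proposal is correct and takes essentially the same route as the paper: from $\eta\in\Omega^{1}(f\wedge f^{\perp})$ closed, you use the third-order condition $(d_{X}d_{Y}\eta(Z))f\le f$ along a curvature direction, split it via the skew-symmetry of $d_{Y}\eta(Z)$ and the non-degenerate metric on $f_{i}/f$, conclude $(d_{Y}\eta(Z))\sigma\in\Gamma s_{i}$ for both $i$ hence $(d_{Y}\eta(Z))\sigma=0$, deduce $\eta f^{\perp}=0$ so $\eta\in\Omega^{1}(\wedge^{2}f)$, and finally use closedness to force $\eta=0$ --- exactly the paper's argument. Your last step usefully spells out the closure computation that the paper leaves as ``one can then check,'' and it is correct up to a harmless label swap: $d_{X}\sigma_{2}\bmod f$ with $X\in\Gamma T_{1}$ spans $f_{1}/f$, while $d_{Y}\sigma_{1}\bmod f$ with $Y\in\Gamma T_{2}$ spans $f_{2}/f$, not the other way around.
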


\section{Projective applicability revisited}
\label{sec:projrev}
It is well known that surfaces in projective space $F:\Sigma\to\mathbb{P}(\mathbb{R}^{4})$ can be represented by their contact lifts in $\mathbb{R}^{3,3}$:
\[ f = F\wedge F^{(1)}.\]
The derived bundle of this contact lift is 
\[ f^{(1)} = F^{(1)}\wedge F^{(1)} + F\wedge \underline{\mathbb{R}}^{4}.\]
Recall also that there is an isomorphism $\phi: \mathfrak{sl}(4)\to \mathfrak{o}(3,3)$, defined by 
\[ \phi(A)\,  (v\wedge w) = Av\wedge w + v\wedge Aw.\]
Since $\phi$ is constant, $\phi$ intertwines the trivial connections on $\underline{\mathfrak{sl}(4)}$ and $\underline{\mathfrak{o}(3,3)}$. Let $\Theta\le \underline{\mathfrak{sl}(4)}$ denote the subbundle of $\underline{\mathfrak{sl}(4)}$ such that $A\in\Gamma\Theta$ if and only if 
\[ A F =0 \quad \text{and}\quad AF^{(1)}\le F.\] 
Then $\phi$ yields an isomorphism between $\Theta$ and $f\wedge f^{\perp}$. Since $\phi$ is constant one has that closed 1-forms taking values in $\Theta$ are in one-to-one correspondence with closed one forms taking values in $f\wedge f^{\perp}$. Furthermore, if we let $\Psi$ denote the subbundle of $\Theta$ defined by $A\in \Gamma \Psi$ if and only if 
\[ A F^{(1)}=0 \quad \text{and} \quad imA\le F,\]
then $\phi$ yields an isomorphism between $\Psi$ and $\wedge^{2}f$. Thus, one deduces that the triviality of second order infinitesimal deformations is preserved by $\phi$. We have thus recovered the classical result of Fubini~\cite{F1916}:

\begin{theorem}
A surface in projective $3$-space is deformable of order two if and only if its contact lift is deformable of order two.
\end{theorem}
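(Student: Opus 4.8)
The plan is to read off both sides of the equivalence from the characterisations already established and to transport one to the other through the constant isomorphism $\phi$. First I would invoke Theorem~\ref{thm:projsord2}: the surface $F$ is deformable of order two precisely when there exists a closed $1$-form $\eta\in\Omega^{1}(\underline{\mathfrak{sl}(4)})$ taking values in the subbundle $\Theta$ (i.e.\ satisfying $\eta F=0$ and $\eta F^{(1)}\le\Omega^{1}(F)$) that is \emph{not} of the form $d\xi$ for any $\xi\in\Gamma\Psi$. Dually, Theorem~\ref{thm:legapp} tells us that the contact lift $f=F\wedge F^{(1)}$ is deformable of order two precisely when there exists a closed $1$-form with values in $f\wedge f^{\perp}$ whose associated quadratic differential $q$ is non-zero. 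The whole argument then reduces to exhibiting a triviality-preserving bijection between the two classes of closed forms.

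The bridge is the constant bundle isomorphism $\phi:\underline{\mathfrak{sl}(4)}\to\underline{\mathfrak{o}(3,3)}$. Since $\phi$ is constant it intertwines the trivial connections and hence commutes with $d$, so post-composition with $\phi$ sends closed forms to closed forms and does so bijectively. As recorded in the preceding discussion, $\phi$ restricts to a bundle isomorphism $\Theta\cong f\wedge f^{\perp}$; therefore $\eta\mapsto\phi\circ\eta$ is a bijection between closed $\Theta$-valued $1$-forms and closed $(f\wedge f^{\perp})$-valued $1$-forms. It remains only to check that this bijection identifies the two notions of triviality.

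For that final step I would use that $\phi$ also restricts to an isomorphism $\Psi\cong\wedge^{2}f$. Because $\phi$ commutes with $d$, it carries $\{d\xi:\xi\in\Gamma\Psi\}$ bijectively onto $\{d\zeta:\zeta\in\Gamma(\wedge^{2}f)\}$, and by Lemma~\ref{lem:qtriv} the latter set is exactly the set of closed $(f\wedge f^{\perp})$-valued forms with $q=0$. Hence $\eta$ is a non-trivial projective infinitesimal deformation if and only if $\phi\circ\eta$ is a closed $(f\wedge f^{\perp})$-valued form with $q\neq0$; combining this with Theorems~\ref{thm:projsord2} and~\ref{thm:legapp} yields the claimed equivalence. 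I expect the only point genuinely requiring care to be the alignment of the triviality data---confirming $\phi(\Psi)=\wedge^{2}f$ and that constancy of $\phi$ transports exactness in $\Psi$ to exactness in $\wedge^{2}f$---but this is bookkeeping with the already-stated properties of $\phi$ rather than a new computation.
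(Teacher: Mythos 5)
Your proposal is correct and takes essentially the same route as the paper: both transport closed $\Theta$-valued forms to closed $(f\wedge f^{\perp})$-valued forms via the constant isomorphism $\phi$ (which intertwines the trivial connections) and match the triviality data through $\phi(\Psi)=\wedge^{2}f$, using Lemma~\ref{lem:qtriv} to identify $q=0$ with exactness. Your write-up merely makes explicit the bookkeeping (bijectivity on closed forms, transport of exactness) that the paper leaves implicit.
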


Applying Theorem~\ref{thm:legapp} in the case that $(s,t)=(3,3)$, one observes that second order deformability of the contact lift of a surface in projective 3-space coincides with the gauge theoretic definition of projective applicability given in~\cite{C2012i}. It is shown in~\cite{F1937} that projectively applicable surfaces come in two classes, \textit{$R$-surfaces} and \textit{$R_{0}$-surfaces}. $R_{0}$-surfaces are characterised by the isothermicity of one of their asymptotic line congruences, whereas $R$-surfaces are characterised by the existence of a pair of enveloped isothermic line congruences that separate the asymptotic line congruences harmonically. We can therefore make the following statement: 

\begin{corollary}[\cite{C1920ii,F1937}]
$R$- and $R_{0}$-surfaces are the only second order deformable surfaces of projective geometry. 
\end{corollary}

\textit{Acknowledgements.} This work is based on part of the author's doctoral thesis~\cite{P2015}. The author would like to thank Professor Francis Burstall, who encouraged him to study this topic and provided much needed guidance in the creation of this paper. The author gratefully  acknowledges the support of the FWF through the research project P28427-N35 ``Non-rigidity and symmetry breaking" and the MIUR grant ``Dipartimenti di Eccellenza'' 20182022, CUP: E11G18000350001, DISMA, Politecnico di Torino.
\bibliographystyle{plain}
\bibliography{bibliography2017}

\end{document}